\theoremstyle{plain}
\newtheorem{theoreme}{Théorème}[section]
\newtheorem{lemme}{Lemme}[section]
\newtheorem{prop}{Proposition}[section]
\newtheorem{lemmeref}{Lemme}
\theoremstyle{definition}
\newtheorem*{ack}{Remerciements}
\theoremstyle{remark}
\newtheorem*{remarque}{Remarque}
\newcommand{\e}{{\rm e}}
\newcommand{\dd}{{\rm d}}
\newcommand{\cC}{{\mathcal C}}
\newcommand{\cPhi}{{\check{\Phi}_0}}
\newcommand{\cD}{{\mathcal D}}
\newcommand{\cL}{{\mathcal L}}
\newcommand{\cY}{{\mathcal Y}}
\newcommand{\ee}{{\varepsilon}}
\newcommand{\bfN}{{\mathbf N}}
\newcommand{\bfZ}{{\mathbf Z}}
\newcommand{\bfR}{{\mathbf R}}
\newcommand{\bfC}{{\mathbf C}}
\newcommand{\bfUn}{{\mathbf 1}}
\newcommand{\vth}{{\vartheta}}
\newcommand{\vphi}{{\varphi}}
\newcommand{\card}{{\rm card}}
\newcommand{\nv}{{\rm nv}}
\newcommand{\Vt}{{\widetilde V}}
\newcommand{\floor}[1]{{\left\lfloor {#1} \right\rfloor}}
\newcommand{\DSZ}{\nu}
\renewcommand{\mod}[1]{({\rm mod\ }#1)}
\renewcommand\Re{\operatorname{\mathfrak{Re}}}
\renewcommand\Im{\operatorname{\mathfrak{Im}}}
\numberwithin{equation}{section}
\title{Sommes friables d'exponentielles et~applications}
\author{Sary Drappeau}
\date{\today}
\address{Université Paris Diderot - Paris 7 \\ Institut de Mathématiques de Jussieu--Paris Rive Gauche \\ UMR 7586 \\
Bâtiment Chevaleret \\ Bureau~7C08 \\ 75205 Paris Cedex 13}
\email{drappeau@math.jussieu.fr}
\begin{document}

\maketitle

\setcounter{tocdepth}{2}

\tableofcontents

\section{Introduction}

Soit~$P(n)$ le plus grand facteur premier d'un entier~$n>1$, avec la convention~$P(1)=1$.
Un entier~$n\geq 1$ est dit $y$-friable si~$P(n) \leq y$.
On note
\[ S(x, y) = \{ n \leq x \mid P(n) \leq y \} \]
l'ensemble des entiers~$y$-friables inférieurs ou égaux à~$x$. Le cardinal~$\Psi(x, y)$ de cet ensemble a fait l'objet d'abondantes études,
les techniques variant suivant le domaine en~$x$ et~$y$ auquel on s'intéresse (\emph{cf.} les articles de survol
de Hildebrand \& Tenenbaum~\cite{TeneHild1993} et Granville~\cite{granville2008smooth} qui exposent de façon exhaustive les travaux antérieurs).
Le problème qui nous intéresse est l'étude des sommes d'exponentielles tronquées sur les friables
\[ E(x, y ; \vth) := \sum_{n\in S(x,y)} \e(n\vth) \]
où l'on note~$\e(t) := \e^{2i\pi t}$.
Le comportement de $E(x, y ; \vth)$ diffère selon le degré de proximité de $\vth$ avec un rationnel de petit dénominateur.
Pour tout entier~$Q \geq 3$ et tout réel~$\vth$, il existe au moins un rationnel~$a/q$ avec
\[ (a, q)=1, \quad q\leq Q, \quad \left|\vth - \frac{a}{q} \right| \leq \frac{1}{qQ} .\]
On note $q(\vth, Q)$ le plus petit des dénominateurs $q$ pour lesquels une fraction $a/q$ vérifie cela ; dans ce cas~$a = a(\vth, Q)$ est unique.
Lorsque~$\vth$ est irrationnel, on a
\[ \lim_{Q \to \infty} q(\vth, Q) = \infty .\]
Une question intéressante est de déterminer dans quelle mesure la relation
\begin{equation}\label{E_negl}
E(x, y ; \vth) = o(\Psi(x, y))
\end{equation}
est valable lorsque~$x$ et~$y$ tendent vers l'infini, avec~$\vth$ irrationnel.
Fouvry et Tenenbaum~\cite[théorème~10]{fouvrytene1991entiers} montrent que la relation~\eqref{E_negl} a lieu pour tout $\delta>0$
et~$\vth$ irrationnel fixés lorsque $x$ et $y$ tendent vers l'infini en vérifiant
\[ x^{\delta (\log \log \log x) / \log \log x} \leq y \leq x .\]
La Bretèche~\cite[corollaires 4 et 5]{rdlb1998expos} montre la validité de~\eqref{E_negl}
pour tout~$\vth$ irrationnel fixé lorsque~$x$ et~$y$ tendent vers l'infini en vérifiant
\[ \exp\{c(\log x \log \log x)^{2/3}\} \leq y \leq x \]
pour une certaine constante~$c>0$.
L'argument présenté ici permet d'étendre encore le domaine de validité de~\eqref{E_negl}. On définit le domaine
\begin{equation}\label{domaine_xy}\tag{$\cD_{c}$}
\exp\{ c (\log x)^{1/2} \log \log x \} \leq y \leq x
\end{equation}

\begin{theoreme}\label{thm_E_negl}
Il existe une constante~$c>0$ telle que la relation~\eqref{E_negl} soit valable pour tout~$\vth$ irrationnel fixé
lorsque~$x$ et~$y$ tendent vers l'infini en restant dans le domaine~$\cD_c$.
\end{theoreme}

Le Théorème~\ref{thm_E_negl} découle d'une estimation asymptotique plus précise de la quantité~$E(x, y ; \vth)$ ; afin de l'énoncer, on définit
\[ u := (\log x)/\log y \]
\[ H(u) := \exp\left\{\frac{u}{\log(u+1)^2}\right\} \qquad (u \geq 1) \]
\[ \zeta(s, y) := \sum_{P(n) \leq y} n^{-s} = \prod_{p\leq y}\left(1 - p^{-s}\right)^{-1} \qquad (\sigma>0) .\]
En remarquant que~${\bfUn}_{[1, x]}(n) \leq (x/n)^\sigma$ pour tout~$\sigma>0$, on obtient la majoration de
Rankin~\cite{rankin1938difference},
\[ \Psi(x, y) \leq x^\sigma \zeta(\sigma, y) \qquad (2\leq y \leq x, \sigma>0) .\]
Le membre de droite est minimal lorsque $\sigma = \alpha = \alpha(x, y)$, la solution à
\[ \sum_{p\leq y} \frac{ \log p}{p^{\alpha}-1} = \log x .\]
Pour $x$ et $y$ suffisamment grands, on a $0 < \alpha < 1$, et plus précisément lorsque~$2\leq y \leq x$,
\begin{equation}\label{estim_alpha}
\alpha(x, y) = \frac{\log(1+y/\log x)}{\log y} \left\{ 1 + O\left(\frac{\log \log(1+y)}{ \log y} \right) \right\}
,\end{equation}
voir par exemple~\cite[theorem 2]{TeneHild86}. On a par ailleurs (\emph{cf.}~\cite[lemma 2]{TeneHild86}),
\begin{equation}\label{majo_alpha_1}
\alpha = 1 + O(\log(u+1)/\log y)
.\end{equation}
La majoration de Rankin fournit en fait une majoration de bonne qualité de~$\Psi(x, y)$ :
elle n'est qu'à un facteur~$O(\log x)$ de l'ordre de grandeur exact, obtenu par Hildebrand et Tenenbaum
par la méthode du col (\emph{cf.}~\cite[theorems 1,~2]{TeneHild86}, formule~\eqref{estim_psi_alpha} \emph{infra}).
Dans ce contexte, le réel~$\alpha$ joue le rôle du point-selle.

De Bruijn~\cite{deBruijn1951} puis Saias~\cite{Saias1989} ont obtenu une estimation de~$\Psi(x, y)$ très précise pour les grandes valeurs de~$y$.
On définit
\[ \Lambda(x, y) := \begin{cases} x\int_{-\infty}^\infty \rho(u-v) \dd(\floor{y^v}/y^v) & \mbox{si } x \in \bfR \setminus \bfN \\
\Lambda(x+0, y) & \mbox{si } x\in \bfN \end{cases} \]
où~$u\mapsto \rho(u)$ est la fonction de Dickman, l'unique solution continue sur~$]0, \infty[$ de l'équation différentielle
aux différences~$u\rho'(u)+\rho(u-1)=0\ (u>1)$ satisfaisant~$\rho(u) = 1 \ (u\in[0,1])$. Pour tout~$\ee>0$, dans le domaine~$(H_\ee)$
défini par
\begin{equation}\label{def_He}\tag{$H_\ee$}
3 \leq \exp\{(\log\log x)^{5/3+\ee}\} \leq y \leq x
\end{equation}
on a
\begin{equation}\label{estim_Psi_saias}
\Psi(x, y) = \Lambda(x, y)\{ 1 + O_\ee(\cY_\ee^{-1}) \}
\end{equation}
où l'on a posé pour tout $\ee>0$,
\begin{equation}\label{def_Ye}
\cY_\ee := \exp\{(\log y)^{3/5-\ee}\}
\end{equation}
On pose également, de même que dans~\cite{AGRB2010},
\[ \lambda(t, y) := \frac{\Lambda(t,y)}{t}  + \frac{1}{\log y}\int_{-\infty}^\infty \rho'((\log t)/\log y - v) \dd(\floor{y^v}/y^v) .\]
On a l'égalité entre mesures
\begin{equation}\label{Lambda_lambda}
\dd \Lambda(t, y) = \lambda(t, y)\dd t - t \dd(\{t\}/t)
.\end{equation}
Par ailleurs, la quantité~$\lambda(t, y) - y\{t/y\}/(t \log y)$ est dérivable par rapport à~$t$ pour tout~$t\geq y$.
On note~$\lambda'(t, y)$ cette dérivée.

On reprend les notations de La Bretèche et Granville~\cite{AGRB2010} pour la région sans zéro des fonctions $L$ de Dirichlet,
et du zéro exceptionnel.
Il existe une constante $b>0$ telle que pour tout $Q \geq 2$ et~$T\geq 2$,
la fonction $s \mapsto L(s, \chi)$ n'admette pas de zéro dans la région
\begin{equation}\label{RSZ}
\left\{ s = \sigma + i \tau \in \bfC \mid \sigma \geq 1 - \frac{b}{\log(Q T)} \mbox{ et } |\tau| \leq T \right\}
\end{equation}
pour tous les caractères $\chi$ de modules $q$ avec $1 \leq q \leq Q$ sauf éventuellement pour des caractères
tous associés à un même caractère primitif $\chi_1$ de module noté $q_1$.
Si ce caractère existe, il est quadratique et le zéro exceptionnel, noté~$\beta$, est unique, simple et réel ;
si pour une même valeur de~$Q$ et deux valeurs distinctes de~$T$, un tel caractère existe, alors il s'agit du même
et on dira que ce caractère est~$Q$-exceptionnel, tout en notant que pour des valeurs de~$T$ suffisamment grandes en fonction de~$Q$,
un tel caractère n'existe pas.
Le \og caractère de module 1 \fg\ désigne ici le caractère trivial, et la fonction $L$ associée est la fonction~$s \mapsto \zeta(s)$.
On note~$\chi_r$ le caractère de module~$q_1 r$ associé à~$\chi_1$, et on pose
$q \mapsto \DSZ(q)$ la fonction indicatrice des entiers multiples de~$q_1$ si~$\beta$ existe, et la fonction nulle sinon.
On garde dans toute la suite la notation~$s = \sigma + i\tau$.

On désigne par~$\cPhi(\lambda, s)$ la fonction définie pour~$\sigma>0$ par
\begin{equation}\label{def_cPhi}
\cPhi(\lambda, s) := \int_0^1 \e(\lambda t) t^{s-1} \dd t
.\end{equation}
En développant en série entière le terme~$\e(\lambda t)$ on obtient~$\cPhi(\lambda, s) = \sum_{n\geq 0} (2i\pi\lambda)^n / ((n+s) n!)$,
et cela permet de prolonger~$s\mapsto\cPhi(\lambda,s)$ en une fonction méromorphe sur~$\bfC$, qui possède un pôle simple en~$s=0$ de résidu~$1$,
et lorsque~$\lambda\neq 0$, un pôle simple en~$s=-n$ pour tout entier~$n\geq 1$, de résidu~$(2i\pi \lambda)^n/n!$.

Enfin on note respectivement~$\omega(n)$ et~$\tau(n)$ le nombre de facteurs premiers et le nombre de diviseurs d'un entier~$n \geq 1$,
et on pose
\[ \cL := \exp\sqrt{\log x}, \]
\begin{equation}\label{defs_t1_t2}
T_1 = T_1(x, y) := \min\{y, \cL\}, \qquad T_2=T_2(x, y) := \min\{y^{1/\log \log \log x}, \cL\}
.\end{equation}

\begin{theoreme}\label{thm_E_pcp}
Il existe des constantes $c_1$ et $c_2$ positives et une fonction $W(x, y ; q, \eta)$
telles que pour tout $(x, y)$ dans le domaine
\begin{equation}\label{domaine_xy*}
(\log x)^{c_1} \leq y \leq \exp\{(\log x) / (\log \log x)^4 \},
\end{equation}
pour tout~$\vth \in \bfR$ avec~$\vth=a/q+\eta$ où~$(a, q)=1$, $q \leq T_2^{c_2}$ et~$|\eta| \leq T_2^{c_2}/x$ on ait
\begin{equation}\label{estim_E_pcp}
\begin{aligned}
E(x, y ; \vth) = &\ \frac{\alpha q^{1-\alpha}}{\vphi(q)} \prod_{p | q} (1-p^{\alpha-1}) \cPhi(\eta x, \alpha) \Psi(x, y)
+ \DSZ(q) \chi_1(a) W(x, y ; q, \eta) \\
& + O\left( \frac{2^{\omega(q)} q^{1-\alpha} \Psi(x, y)}{\vphi(q)(1+|\eta|x)^\alpha }\frac{(\log q)^2 (\log (2+|\eta|x))^3}{u}
 + \frac{\Psi(x, y)}{T_2^{c_2}} \right)
\end{aligned}
\end{equation}
où~$\chi_1$ est l'éventuel caractère~$T_2^{c_2}$-exceptionnel, et avec, dans le cas~$\DSZ(q)\neq 0$,
\begin{equation}\label{estim_W}
W(x, y ; q, \eta) \ll \frac{2^{\omega(q/q_1)}\sqrt{q_1}(q/q_1)^{1-\alpha}\Psi(x,y)}{\vphi(q)(1+|\eta|x)^\alpha x^{1-\beta}H(u)^{c_2}} 
+ \frac{2^{\omega(q/q_1)}\sqrt{q_1}(q/q_1)^{1-\alpha}\Psi(x, y)}{\vphi(q)T_2^{c_2}}
.\end{equation}

Si de plus on a~$(x, y) \in (H_\ee)$, $q \leq \cY_{\ee}$ et~$|\eta| \leq \cY_{\ee}/(qx)$ pour un certain~$\ee>0$, alors
\begin{equation}\label{estim_E_BG}
\begin{aligned}
E(x, y ; \vth) = & \ \Vt(x, y ; q, \eta) + \DSZ(q) \chi_1(a) W(x, y ; q, \eta) \\
& + O_\ee\left( \frac{2^{\omega(q)} \Psi(x, y)}{\vphi(q) \cY_{\ee}} + \frac{\Psi(x, y)}{T_2^{c_2}} \right)
\end{aligned}
\end{equation}
où l'on a posé
\begin{equation}\label{def_Vt}
\Vt(x, y ; q, \eta) :=
\sum_{k|q} \frac{\mu(q/k)k}{ \vphi(q) } \sum_{\substack{n\leq x\\k|n}} \e(n\eta) \lambda\left(\frac{n}{k}, y\right)
.\end{equation}
\end{theoreme}

\begin{remarque}
Les conditions sur~$q$ et~$\eta$ dans l'estimation~\eqref{estim_E_pcp} sont moins restrictives que celles de~\eqref{estim_E_BG},
mais son terme d'erreur est moins bon.
\end{remarque}

Les estimations du Théorème~\ref{thm_E_pcp} ne sont valables que lorsque~$\vth$ est proche d'un rationnel à petit dénominateur,
ce qui correspond aux arcs majeurs dans la terminologie de la méthode du cercle. Les valeurs complémentaires de~$\vth$ sont traitées
à l'aide du résultat suivant, déduit de~\cite[corollaire~3]{rdlb1998expos}.
\begin{lemmeref}[\cite{rdlb1998expos}, corollaire~3]\label{majo_E_arcmin}
Lorsque les réels~$\vth, x, R$ vérifient~$x, R\geq 2$ et $q(\vth, \lceil x/R \rceil) \geq R$, on a
\[ E(x, y ; \vth) \ll x (\log x)^4 \{ 1/R^{1/4} + 1/\cL \} .\]
\end{lemmeref}

\begin{proof}[Démonstration du Théorème~\ref{thm_E_negl}]
Soient~$c_1$ et~$c_2$ les constantes données par le~Théorème~\ref{thm_E_pcp} et supposons~$y^{1/\log\log\log x}\geq \cL$,
de sorte que~$T_1 = T_2 = \cL$. On pose~$q = q(\vth, \lceil x/\cL^{c_2} \rceil)$ et~$\theta = a/q + \eta$
avec~$\eta \leq 1/(q\lceil x/\cL^{c_2} \rceil)$ ; on a~$|\eta|x \leq \cL^{c_2}$. Lorsque~$y \geq \exp\{(\log x)/(\log \log x)^4 \}$,
les résultats de La Bretèche~\cite[théorème~2]{rdlb1998expos} s'appliquent,
on suppose donc sans perte de généralité que~$y \leq \exp\{(\log x)/(\log \log x)^4 \}$. Lorsque~$q> \cL^{c_2}$, d'après le Lemme~\ref{majo_E_arcmin}
on a~$E(x, y ; \vth) \ll x/\cL^{c_3}$ pour une constante~$c_3>0$. Pour~$\exp\{c\sqrt{\log x}\log \log x\} \leq y$ avec~$c$ suffisamment grande,
cela est~$o(\Psi(x, y))$. Enfin, lorsque~$q \leq \cL^{c_2}$, l'estimation~\eqref{estim_E_pcp} est valable
et tous les termes du membre de droite sont~$o(\Psi(x, y))$ quand~$q \to \infty$ et~$x \to \infty$,
en remarquant que~$\cPhi(\eta x, \alpha) \ll 1$.
\end{proof}

La démonstration que l'on propose du Théorème~\ref{thm_E_pcp} utilise une majoration du type~$H(u)^{-\delta} (\log x) \ll_\delta 1$
pour tout~$\delta>0$ fixé, qui n'est pas valable lorsque~$y$ est trop proche de~$x$. Ceci explique la borne supérieure en~$y$
du domaine~\eqref{domaine_xy*}.

Le domaine en~$x$ et~$y$ dans lequel on peut majorer non trivialement~$E(x, y ; \vth)$ pour~$\vth$ irrationnel a une influence directe
sur le domaine de validité de certains résultats qui sont liés aux sommes d'exponentielles.
On en cite deux ; le premier est une généralisation d'un théorème de Daboussi~\cite{daboussi1975fonctions}.
\begin{theoreme}\label{thm_daboussi}
Il existe une constante~$c>0$ telle que pour toute fonction~$Y: [2,\infty[ \to \bfR$ croissante avec $(Y(x), x) \in \cD_{c}$,
toute fonction~$f:\bfN \to \bfC$ multiplicative satisfaisant pour tous~$x$ et~$y$
avec~$Y(x) \leq y \leq x$,
\[ \sum_{n\in S(x,y)} |f(n)|^2 \leq K_f \Psi(x, y) \]
pour un certain réel~$K_f>0$ dépendant au plus de~$f$,
et tout~$\vth$ irrationnel, lorsque~$x$ et~$y$ tendent vers l'infini avec~$Y(x) \leq y \leq x$, on ait
\[ \sum_{n \in S(x, y)} f(n)\e(n\vth) = o_{\vth}(K_f^{1/2}\Psi(x, y)) .\]
\end{theoreme}
Cela est une extension de~\cite[théorème~1.5]{tenerdlb2005turan}. Suivant Dupain, Hall et Tenenbaum~\cite{dupain1982equirepartition},
on peut se poser la question de savoir pour quelle classe de fonctions multiplicatives~$f$ et
quelles suites d'ensembles finis d'entiers~$(E_N)_{N \geq 1}$ la relation
\begin{equation*}
\sum_{n \in E_N} f(n) \e(n\vth) = o\Big(\sum_{n \in E_N} |f(n)|\Big)
\end{equation*}
est valable pour tout~$\vth$ irrationnel fixé lorsque~$N\to \infty$. Le Théorème~\ref{thm_daboussi} aborde le cas particulier~$E_N = S(N, y_N)$
avec~$Y(N)\leq y_N \leq N$.

La deuxième application que l'on considère concerne le problème du comptage des solutions friables à l'équation~$a+b=c$. Posons
\begin{equation}\label{def_Nxy}
N(x, y) := \card \{(a, b, c) \in S(x, y)^3 \mid a+b=c\}
.\end{equation}
Lagarias et Soundararajan étudient cette quantité dans~\cite{SoundLaga2011}.
Leur travail, précisé par l'auteur~\cite{D2012}, implique en particulier qu'en supposant l'hypothèse de Riemann
généralisée aux fonctions $L$ de Dirichlet, on a
\begin{equation}\label{N_sim}
N(x, y) \sim \frac{\Psi(x, y)^3}{2 x}
\end{equation}
lorsque $(\log \log x)/\log y \to 0$. Dans~\cite{AGRB2010}, La Bretèche et Granville obtiennent inconditionnellement,
à partir des estimations de~$E(x, y ; \vth)$ démontrées dans~\cite{rdlb1998expos}, que la relation~\eqref{N_sim} est valable,
pour tout~$\ee>0$ fixé, lorsque~$x$ et~$y$ tendent vers l'infini avec~$\exp\{(\log x)^{2/3+\ee} \} \leq y \leq x$.
Les estimations de~$E(x, y ; \vth)$ présentées ici permettent d'étendre le domaine de validité de cette estimation.
\begin{theoreme}\label{thm_abc}
Il existe~$c>0$ tel que lorsque~$(x, y) \in \cD_{c}$, on ait
\begin{equation}\label{estim_N}
 N(x, y) = \frac{\Psi(x, y)^3}{2x} \left\{ 1 + O\left(\frac{\log(u+1)}{\log y}\right) \right\}
.\end{equation}
\end{theoreme}
\begin{remarque}
Le terme d'erreur dans l'estimation~\eqref{estim_N} est attendu comme optimal. On peut, à la façon de Saias~\cite{Saias1989},
obtenir un développement du membre de gauche selon les puissances de~$(\log y)^{-1}$.

\medskip

Dans~\cite{AGRB2010}, les auteurs étudient la densité sur les friables d'une suite générale satisfaisant
des hypothèses de crible. Cette application n'est pas développée ici mais le Théorème~\ref{thm_E_pcp} permet d'étendre leur
résultat à tout~$(x, y) \in \cD_{c}$ pour un certain~$c>0$.
\end{remarque}

\begin{ack}
L'auteur adresse ses vifs remerciements son directeur de thèse Régis de la Bretèche pour sa grande patience et ses nombreux conseils,
et à Adam Harper pour des remarques qui ont aidé à améliorer ce manuscrit.
\end{ack}

\section{Estimation de~$E(x, y ; \vth)$}

\subsection{Méthode du col}

Soit à étudier la fonction sommatoire sur les entiers friables d'une suite de nombres complexes de modules~$\leq 1$
\[ A(x, y) = \sum_{n \in S(x, y)} a_n \]
lorsque~$x\not\in \bfN$, prolongée par~$A(x, y) := A(x-0, y) + a_x/2$ lorsque~$x$ est un entier~$y$-friable.
La série de Dirichlet associée
\begin{equation}\label{def_serie_dir}
F(s, y) := \sum_{P(n)\leq y} a_n n^{-s}
\end{equation}
converge absolument lorsque~$\sigma>0$. En appliquant la formule de Perron, on écrit
\[ A(x, y) = \frac{1}{2i\pi} \int_{\kappa-i\infty}^{\kappa+i\infty}{F(s, y)x^s\frac{\dd s}{s} } \]
où~$\kappa>0$ est fixé.
La méthode du col consiste à modifier le chemin d'intégration pour faire en sorte que la contribution principale à l'intégrale
entière vienne d'une petite partie du chemin d'intégration, suffisamment petite pour pouvoir l'estimer par une formule de Taylor.
Dans le cas~$a_n=1$, où il s'agit essentiellement d'estimer~$\Psi(x, y)$, on intègre sur la droite~$\sigma=\alpha$.
Le point~$\alpha$ est le minimum de la fonction~$\sigma\mapsto x^\sigma\zeta(\sigma, y)$ et sa dérivée seconde en ce point est non nulle,
le point~$\tau=0$ est donc un maximum local de la fonction~$\tau\mapsto |x^{\alpha+i\tau}\zeta(\alpha+i\tau, y)|$. On définit
\[ \sigma_2 = \sigma_2(x, y) := \sum_{p\leq y} \frac{p^\alpha (\log p)^2}{(p^\alpha-1)^2} \]
qui est la valeur en~$\alpha$ de la dérivée seconde de la fonction $s\mapsto \log\zeta(s, y)$.
Lorsque~$2\leq y\leq x$, on a d'après~\cite[theorem 2]{TeneHild86},
\begin{equation}\label{estim_sigma2}
 \sigma_2(x, y) = \log x \log y \left(1+\frac{\log x}{y}\right) \left\{ 1 + O\left(\frac{1}{\log(1+u)}+\frac{1}{\log y}\right) \right\}
.\end{equation}
Le résultat principal de~\cite{TeneHild86} est l'estimation, uniforme pour~$2 \leq y \leq x$,
\begin{equation}\label{estim_psi_alpha}
\Psi(x, y) = \frac{x^\alpha\zeta(\alpha, y)}{\alpha\sqrt{2\pi\sigma_2}} \left\{1+O\left(\frac{1}{u}+\frac{\log y}{y}\right)\right\}
.\end{equation}
Par rapport aux précédents résultats sur~$\Psi(x, y)$, cette estimation a l'avantage,
au prix d'un terme principal moins explicite, d'être valide sans aucune contrainte sur~$x$ et~$y$.
L'estimation~\eqref{estim_sigma2} implique en particulier que pour tout~$(x,y)$ avec~$2 \leq y \leq x$,
on a
\begin{equation}\label{x_zeta_psi}
\zeta(\alpha, y) x^\alpha \ll (\log x) \Psi(x, y)
.\end{equation}

Un autre intérêt de la méthode du col est qu'elle permet une étude uniforme du rapport~$\Psi(x/d, y)/\Psi(x, y)$,
ce qui est utile dans beaucoup d'applications. Cette question
ainsi que d'autres problèmes associés sont étudiés en détail dans~\cite{TeneRDLB2005Stat}.
\begin{lemmeref}[\cite{TeneRDLB2005Stat}, théorème 2.4]\label{estim_psi_local}
Il existe deux constantes positives~$b_1$ et~$b_2$  et une fonction~$b = b(x, y; d)$ satisfaisant~$b_1 \leq b \leq b_2$
telles que pour $\log x \leq y \leq x$ et~$1 \leq d \leq x$ on ait uniformément
\[
\Psi\left( \frac{x}{d}, y\right) = \left\{1 + O\left(\frac{t}{u}\right) \right\} \left(1 - \frac{t^2}{u^2}\right)^{b u} \frac{\Psi(x, y)}{d^\alpha}
\]
où l'on a posé $t = (\log d)/\log y$.
\end{lemmeref}
Cela implique sous les mêmes hypothèses la majoration
\begin{equation}\label{estim_psi_local_crude}
\Psi(x/d, y) \ll  \Psi(x, y)/d^\alpha,
\end{equation}
celle-ci étant valable pour tout~$d\geq 1$.

\subsection{Somme sur les caractères, formule de Perron}

Pour tout caractère de Dirichlet $\chi$ de module $q$, on définit la somme de Gauss $\tau(\chi) := \sum_{b \mod{q}}{\chi(b) \e(b/q)}$.
On a pour tous~$x$ et~$y$ avec~$x \geq y \geq 2$, $\vth \in \bfR$ et $(a, q) \in \bfZ \times \bfN$ avec $\vth = a/q + \eta$,
\begin{equation}\label{E_sum_chi}
E(x, y ;\vth) = \sum_{\substack{d | q \\ P(d) \leq y}}{\frac{1}{\vphi(q/d)} \sum_{\chi \mod{q/d}}{\chi(a) \tau(\overline{\chi}) \sum_{m \in S(x/d, y)}{\e(m d \eta) \chi(m)} } }
.\end{equation}
Une façon d'étudier~$E(x, y ; \vth)$ est donc d'obtenir des estimations uniformes de la somme
\begin{equation}\label{def_Psi0}
 \Psi_0(z, y ; \chi, \gamma) := \sum_{n \in S(z, y)}{\e(n \gamma) \chi(n)}
.\end{equation}
On rappelle que~$\cPhi(\lambda, s)$ et~$F(s, y)$ sont définis respectivement en~\eqref{def_cPhi} et~\eqref{def_serie_dir}.

\begin{lemme}\label{perron_phi}
Soit $(a_n)_{n\geq 1}$ une suite de nombres complexes telle que l'abscisse de convergence absolue de la série~$\sum_{P(n)\leq y} a_n n^{-s}$
soit strictement inférieure à $1/2$.
Lorsque~$x, y \geq 2$, $\eta \in \bfR$, $T\geq 2$, $\kappa\in[1/2, 1]$, $c \in ]0,1/2]$ et~$M\geq 0$,
et lorsque les inégalités suivantes sont satisfaites :
\begin{align*}
\sum_{P(n) \leq y} |a_n| n^{-\kappa} \leq M \zeta(\kappa, y) \qquad \mbox{et} \qquad
\sum_{\substack{P(n) \leq y \\ |n - x| < x/\sqrt{T}}} |a_n|  \leq M \Psi(x, y) / T^c,
\end{align*}
on a uniformément
\begin{equation}\label{perron_phi_estim}
\begin{aligned}
\sum_{n \in S(x, y)}{a_n \e(n\eta)} = &\ \frac{1}{2i\pi} \int_{\kappa-iT}^{\kappa+iT} F(s, y) x^s \cPhi(\eta x, s) \dd s \\
&\ + O\left( M (\log T)(1+|\eta x|) \frac{x^\kappa \zeta(\kappa, y)}{T^c} \right).
\end{aligned}
\end{equation}
\end{lemme}
\begin{remarque}
En particulier, si l'on suppose que la suite~$(a_n)$ est bornée, un théorème de Hildebrand sur le nombre des friables
dans les petits intervalles~\cite[theorem~4]{hildebrand1985integers} ainsi que la majoration~\eqref{estim_psi_local_crude}
assurent que les hypothèses sur~$(a_n)_{n\geq 1}$ sont satisfaites pour~$M$ absolu et~$c=\alpha(x, y)/2$.
Lorsque~$(\log x)^K \leq y \leq x$ pour un certain~$K>1$ fixé, on a~$\alpha(x, y) \gg_K 1$.
\end{remarque}

Le Lemme~\ref{perron_phi} découle du lemme suivant, qui est une généralisation
d'un lemme classique de Perron (\emph{cf.}~\cite[lemme II.2.2]{Tene2007}).

\begin{lemme}\label{perron_phi_lemme}
Pour tous réels $x, \kappa, T$ et~$\lambda$ avec $x\geq 0$, $\kappa \in [1/2, 1]$ et~$T \geq 2$, on a
\[ \left| {\bfUn}_{[1,\infty[}(x) \e(\lambda/x) - \frac{1}{2 i \pi}
\int_{\kappa - iT}^{\kappa + iT}{x^s \cPhi(\lambda, s) \dd s} \right| \ll \frac{(\log T) (1+|\lambda|) x^\kappa}{1+ T|\log x|} .\]
\end{lemme}

On énonce pour cela un lemme qui fournit des informations sur la taille de~$\cPhi$.

\begin{lemme}\label{majo_phi}
Pour tous $s \in \bfC$ et $\lambda \in \bfR$ avec $\sigma \geq 1/2$, on a
\[ \cPhi(\lambda, s) \ll \min\left\{ \frac{1}{\sigma}, \quad
\frac{|s|}{\sigma}\log(2+ |\lambda|)\big(|\lambda|^{-\sigma}+|\lambda|^{-1}\big), \quad
\frac{1+|\lambda|/\sigma}{|s|} \right\} .\]
\end{lemme}

\begin{proof}
On a trivialement~$\cPhi(\lambda, s) \ll 1/\sigma$. On a d'une part lorsque $|\lambda|\geq1$,
\begin{align*}
\int_0^1{\e(\lambda t) t^{s-1}\dd t} =&\ 
\left[ \frac{\e(\lambda t)-1}{2 i \pi \lambda} t^{s-1} \right]_0^1  - (s-1)\int_0^1{\frac{\e(\lambda t)-1}{2 i \pi \lambda} t^{s-2} \dd t}  \\
&\ \ll |\lambda|^{-1} + |s-1|\Big(\frac{|\lambda|^{-\sigma}}\sigma + \frac{|\lambda|^{-1}-|\lambda|^{-\sigma}}{\sigma-1}\Big) \\
&\ \ll \frac{|s|}{\sigma}\log(2+ |\lambda|)\big(|\lambda|^{-\sigma}+|\lambda|^{-1}\big)
\end{align*}
en séparant l'intégrale selon la position de~$t$ par rapport à~$1/|\lambda|$, et d'autre part, pour tout~$\lambda$,
\begin{align*}
\int_0^1{\e(\lambda t) t^{s-1}\dd t} =
\left[ \e(\lambda t) \frac{t^s}{s} \right]_0^1  - \frac{1}{s}\int_0^1{(2 i \pi \lambda) \e(\lambda t) t^{s} \dd t} 
\ll \frac{1+|\lambda|/\sigma}{|s|}
.\end{align*}
Le résultat suit en notant que $|s| \log(2+ |\lambda|) / |\lambda|^{\sigma} \gg 1$ pour~$|\lambda|<1$.
\end{proof}

\begin{proof}[Démonstration du lemme~\ref{perron_phi_lemme}]

Le cas~$\lambda=0$ étant démontré dans~\cite[lemme II.2.2]{Tene2007}, on suppose~$\lambda\neq 0$.
On rappelle que la fonction $s \mapsto \cPhi(\lambda, s)$ est prolongeable en une fonction méromorphe sur~$\bfC$
ayant pour tout~$n\geq 0$ un pôle simple en~$s=-n$, de résidu~$(2i\pi\lambda)^n/n!$.
On suppose~$x<1$. Pour tout réel~$k\geq 0$, en intégrant sur le rectangle de côtés
\[ \kappa+k \pm iT, \kappa \pm iT \]
on obtient grâce aux majorations du Lemme~\ref{majo_phi},
\begin{equation}\label{perronphi_x_petit}
\int_{\kappa-iT}^{\kappa+iT} x^s \cPhi(\lambda, s) \dd s \ll
\frac{(1+|\lambda|) x^\kappa (1-x^k)}{T |\log x|} + \frac{T(1+|\lambda|) x^{\kappa+k}}{k+\kappa} 
\ll \frac{(1+|\lambda|) x^\kappa}{T |\log x|}
\end{equation}
en faisant tendre~$k$ vers l'infini.

Pour $x\geq 1$, d'après la définition de~$\cPhi(\lambda, s)$, on a
\begin{align*}
I := \frac{1}{2i\pi} \int_{\kappa-iT}^{\kappa+iT} x^s \cPhi(\lambda, s) \dd s
&\ = \frac{1}{2\pi} \int_0^1 \left( \int_{-T}^T (t x)^{i\tau} \dd \tau \right) \e(\lambda t) \frac{(tx)^\kappa}{t} \dd t \\
&\ = \frac{1}{\pi} \int_{-\infty}^{T\log x} \frac{\sin w}{w}\e(\lambda \e^{w/T} / x) \e^{\kappa w/T} \dd w
\end{align*}
ayant posé $x t = e^{w/T}$. Une intégration par parties permet d'écrire~$I = I_1 + I_2 - I_3$ avec
\begin{align*}
I_1 := &\ \frac{1}{\pi} \int_{-\infty}^{T\log x} \frac{1-\cos w}{w^2} \e(\lambda \e^{w/T}/x) \e^{\kappa w/T} \dd w, \\
I_2 := &\ \frac{1-\cos(T\log x)}{\pi T \log x} \e(\lambda) x^{\kappa}, \\
I_3 := &\ \frac{1}{\pi} \int_{-\infty}^{T\log x} \frac{1-\cos w}{w} \left( \frac{2i\pi\lambda}{xT}\e^{w/T}
+ \frac{\kappa}{T}\right) \e(\lambda \e^{w/T}/x) \e^{\kappa w/T} \dd w
.\end{align*}
Des estimations élémentaires fournissent
\begin{align*}
I_1 = &\ \e(\lambda/x) - \frac{\e(\lambda/x)}{\pi} \int_{T\log x}^{\infty} \frac{1-\cos w}{w^2} \dd w \\
 &\ + \frac{1}{\pi} \int_{-\infty}^{T\log x} \frac{1-\cos w}{w^2} \left( \e(\lambda \e^{w/T}/x) \e^{\kappa w/T} - \e(\lambda/x) \right) \dd w \\
 = &\ \e(\lambda/x) + O\left( \frac{1}{1+T \log x} + \frac{x^\kappa}{T (1+(\log x)^2)}
+ \frac{\log T}{T} \left( 1 + \frac{|\lambda|}{x} \right) \right), \\
I_2 \ll &\ \frac{ T (\log x) x^\kappa}{1+(T\log x)^2}, \\
 I_3 \ll &\ \frac{\log T}{T}\left(1+\frac{|\lambda|}{x}\right)
+ \frac{(\log T) (1 + |\lambda|) x^\kappa}{T \log x}{\bfUn}_{[1, \infty[}(T \log x)
.\end{align*}

Ainsi, lorsque $x>1$, on a
\begin{equation}\label{perronphi_x_grand}
\left| \e(\lambda/x) - \int_{\kappa-iT}^{\kappa+iT} x^s \cPhi(\lambda, s) \dd s \right| = O\left( \frac{(\log T) (1+|\lambda|) x^\kappa}{T \log x} \right)
\end{equation}
et lorsque $x=1$, on a
\begin{equation}\label{perronphi_x_un}
\int_{\kappa-iT}^{\kappa+iT} \cPhi(\lambda, s) \dd s \ll 1 + \frac{(\log T) (1+|\lambda|)}{T} \ll 1+|\lambda|
.\end{equation}

Lorsque~$T |\log x| \geq 1$ l'estimation voulue découle de~\eqref{perronphi_x_petit} et \eqref{perronphi_x_grand}.
Si~$\e^{-1/T} < x < \e^{1/T}$, on a
\begin{align*}
\frac{1}{2i \pi} \int_{\kappa-iT}^{\kappa+iT} x^s \cPhi(\lambda, s) \dd s
 &\ = \frac{x^\kappa}{2 \pi} \int_{-T}^{T} \cPhi(\lambda, \kappa + i\tau) \dd \tau +
\frac{x^\kappa}{2 \pi} \int_{-T}^{T} (x^{i\tau} - 1) \cPhi(\lambda, \kappa + i\tau) \dd \tau \\
&\ \ll (1+|\lambda|) x^\kappa
\end{align*}
grâce à la majoration~\eqref{perronphi_x_un} et au Lemme~\ref{majo_phi}.
Cela implique
\[ \left| {\bfUn}_{[1,\infty[}(x) \e(\lambda/x) - \frac{1}{2 i \pi}
\int_{\kappa - iT}^{\kappa + iT}{x^s \cPhi(\lambda, s) \dd s} \right| \ll (1+|\lambda|) x^\kappa \]
ce qui fournit l'estimation voulue pour~$T|\log x| < 1$.
\end{proof}

\begin{remarque}
Un traitement plus fin de~$I_1$ permet d'obtenir dans le cas~$x=1$,
\[ \left| \frac{\e(\lambda)}{2} - \frac{1}{2i\pi} \int_{\kappa-iT}^{\kappa+iT} \cPhi(\lambda, s) \dd s \right| \ll \frac{(\log T)(1+|\lambda|)}{T} \]
mais cela ne sera pas utilisé ici.
\end{remarque}

\begin{proof}[Démonstration du lemme~\ref{perron_phi}]
Une application du Lemme~\ref{perron_phi_lemme} avec~$x$ remplacé par~$x/n$
et~$\lambda$ par~$\eta x$ permet d'écrire sous les hypothèses de l'énoncé,
\begin{align*}
\sum_{n \in S(x, y)}{a_n \e(n\eta)} = &\ \frac{1}{2i\pi} \int_{\kappa-iT}^{\kappa+iT} F(s, y) x^s \cPhi(\eta x, s) \dd s \\
&\ + O\left( (\log T)(1+|\eta x|) x^\kappa \sum_{P(n)\leq y}{ \frac{|a_n|n^{-\kappa}}{1+T|\log(x/n)|} } \right)
.\end{align*}
De même que dans~\cite[preuve du théorème~4]{fouvrytene1991entiers}, on sépare la somme dans le terme d'erreur selon la taille de $|\log(n/x)|$.
Les entiers~$n\in ]x-x/\sqrt T, x+x/\sqrt T[$ contribuent d'une quantité
\[ \ll (\log T)(1+|\eta x|)
\sum_{\substack{P(n)\leq y\\ x-x/\sqrt{T} < n \leq x+x/\sqrt{T}}} |a_n| \]
qui est de l'ordre du terme d'erreur annoncé grâce aux hypothèses sur~$(a_n)$ ainsi que la majoration~$\Psi(x, y) \leq x^\kappa \zeta(\kappa, y)$.
La contribution des entiers~$n \not\in ]x-x/\sqrt T, x+x/\sqrt T[$ est
\[ \ll (\log T)(1+|\eta x|)  \frac{x^\kappa}{\sqrt T} \sum_{P(n)\leq y} |a_n| n^{-\kappa} \]
qui est à nouveau de l'ordre du terme d'erreur annoncé.
\end{proof}

On montre enfin le résultat suivant, qui assure que dans le cadre des Propositions~\ref{caracts_pcp} et~\ref{caracts_sieg} \emph{infra},
les hypothèses du Lemme~\ref{perron_phi} sont vérifiées avec~$\kappa = \alpha$.
\begin{lemme}\label{hypoth_perron_a}
Soient~$q \geq 1$ un entier~$y$-friable, et $q_1$ un diviseur de~$q$. Soit~$\chi_1$ un caractère primitif modulo~$q_1$ et pour tout~$r \geq 1$,
$\chi_r$ le caractère modulo~$q_1r$ associé à~$\chi_1$. On note~$r_1 := q/q_1$ et on pose pour tout~$n\geq 1$,
\[ a_n := \frac{\tau(\chi_1) \mu\left(\frac{r_1}{(r_1, n)}\right) \chi_1\left(\frac{r_1}{(r_1, n)}\right)
\chi_{\frac{r_1}{(r_1,n)}}\left(\frac{n}{(r_1, n)}\right)}{\vphi(q_1) \vphi\left(\frac{r_1}{(r_1, n)}\right)} \]
Alors lorsque~$\kappa \in [1/2, 1]$, $2 \leq y \leq x$ et~$2\leq T\leq x$, on a uniformément
\begin{align*}
\sum_{P(n) \leq y} |a_n| n^{-\kappa} &\ \ll \frac{2^{\omega(q/q_1)} \sqrt{q_1}}{\vphi(q)} \left(\frac{q}{q_1}\right)^{1-\kappa} \zeta(\kappa, y), \\
\sum_{\substack{P(n) \leq y \\ |n-x| < x/\sqrt{T}}} |a_n| &\ \ll \frac{2^{\omega(q/q_1)} \sqrt{q_1}}{\vphi(q)} \left(\frac{q}{q_1}\right)^{1-\alpha}
\frac{\Psi(x,y)}{T^{\alpha/2}}
.\end{align*}
\end{lemme}

\begin{proof}
On a, en écrivant~$(n, r_1) = r_1/d$ et~$n = m r_1/d$,
\begin{align*}
\sum_{P(n) \leq y} |a_n| n^{-\kappa} &\ = \frac{\sqrt{q_1}}{\vphi(q_1)} \left(\frac{q}{q_1}\right)^{-\kappa}
\sum_{\substack{d|r_1 \\ (d, q_1)=1}} \frac{\mu^2(d) d^\kappa}{\vphi(d)} \sum_{\substack{P(m) \leq y \\ (m, q_1 d) = 1}} m^{-\kappa} \\
&\ = \frac{\sqrt{q_1}}{\vphi(q_1)} \left(\frac{q}{q_1}\right)^{-\kappa} \zeta(\kappa, y) 
\prod_{p | q_1}(1-p^{-\kappa}) \prod_{\substack{p | q/q_1 \\ p \nmid q_1}} \left(1 + \frac{p^\kappa-1}{p-1}\right) \\
&\ \leq \frac{\sqrt{q_1}}{\vphi(q)} \left(\frac{q}{q_1}\right)^{1-\kappa} \zeta(\kappa, y) 2^{\omega(q/q_1)}
.\end{align*}
Par ailleurs, avec les mêmes notations, on a
\begin{align*}
\sum_{\substack{P(n) \leq y \\ |n-x|\leq x/\sqrt{T}}} |a_n| &\ = \frac{\sqrt{q_1}}{\vphi(q_1)}
\sum_{\substack{d|r_1 \\ (d, q_1)=1}}  \frac{\mu^2(d)}{\vphi(d)}
\sum_{\substack{P(m) \leq y \\ \left|m r_1/d - x \right| \leq x/\sqrt{T} \\ (m, q_1 d) = 1}} 1 \\
&\ \leq \frac{\sqrt{q_1}}{\vphi(q_1)} \sum_{\substack{d|r_1 \\ (d, q_1)=1}}
\frac{\mu^2(d)}{\vphi(d)} \left(\Psi(x d(1+1/\sqrt{T}) / r_1, y) - \Psi(x d (1-1/\sqrt{T}) / r_1, y)\right) \\
&\ \leq 2 \frac{\sqrt{q_1}}{\vphi(q_1)} \sum_{\substack{d|r_1 \\ (d, q_1)=1}}  \frac{\mu^2(d)}{\vphi(d)} \Psi(x d / (r_1 \sqrt{T}), y)
\end{align*}
d'après~\cite[theorem~4]{hildebrand1985integers}. La majoration~\eqref{estim_psi_local_crude} a lieu
avec~$d$ remplacé par~$\sqrt{T} r_1 / d$ et ainsi
\begin{align*}
\sum_{\substack{P(n) \leq y \\ |\frac{n}{x}-1|\leq1/\sqrt{T}}} |a_n|
&\ \ll \frac{\sqrt{q_1} \Psi(x, y)}{\vphi(q_1) T^{\alpha/2}} \left(\frac{q}{q_1}\right)^{-\alpha}
\sum_{\substack{d|r_1 \\ (d, q_1)=1}}  \frac{\mu^2(d) d^\alpha}{\vphi(d)} \\
&\ = \frac{\sqrt{q_1} \Psi(x, y)}{\vphi(q_1) T^{\alpha/2}} \left(\frac{q}{q_1}\right)^{-\alpha}
\prod_{\substack{p | q/q_1 \\ p\nmid q_1}} \left(1+\frac{p^\alpha}{p-1}\right) \\
&\ \leq \frac{\sqrt{q_1} \Psi(x, y)}{\vphi(q) T^{\alpha/2}} \left(\frac{q}{q_1}\right)^{1-\alpha} 2^{\omega(q/q_1)}
\end{align*}
qui est bien la majoration voulue.
\end{proof}

\subsection{Estimation de $L(s, \chi ; y)$ dans la bande critique}

Une application du Lemme~\ref{perron_phi} fournit lorsque~$z\not\in\bfN$
\begin{equation}\label{int_perron_full}
\Psi_0(z, y ; \chi, \gamma) =  \frac{1}{2i\pi} \int_{\kappa-i\infty}^{\kappa+i\infty} L(s, \chi ; y) z^s \cPhi(\gamma x, s) \dd s
\end{equation}
où l'intégrale converge en valeur principale.
Le lemme suivant, repris pour l'essentiel de~\cite[Lemma~1]{HarperBV2012}, fournit un contrôle sur les variations de~$L(s, \chi ; y)$.
La qualité de cette estimation est étroitement liée à notre connaissance d'une région sans zéro pour~$L(s, \chi)$.

Dans cette section et les suivantes, $c_1$ et $c_2$ désignent toujours des constantes absolues positives,
$c_1$ étant choisie typiquement grande et~$c_2$ typiquement petite.


\begin{lemme}\label{majo_l_partiel}
Il existe des constantes~$c_1, c_2 >0$ telles que lorsque~$\chi$ est un caractère primitif de module~$q > 1$,
$\ee \in ]0, 1/2]$, $H \geq 4$ et lorsque la fonction~$L(s, \chi)$ n'a pas de zéro dans la région
\begin{equation}\label{l_partiel_rsz} \{s \in \bfC \mid \sigma \in ]0,\ee]\cup[1-\ee, 1], \tau \in[-H,H] \}, \end{equation}
alors pour tout~$y\geq (qH)^{c_1}$ et tout~$s\in \bfC$ avec~$\sigma \in [0, 1[$ et~$|\tau| \leq H/2$, on ait
\begin{equation}\label{estim_l_partiel_norm}
\begin{aligned}
\sum_{n \leq y} \frac{\Lambda(n) \chi(n)}{n^s} =
O\left( \frac{y^{1-\sigma-c_2\ee}}{1-\sigma} + \frac{y^{1-\sigma} \log^2(qyH)}{(1-\sigma)H} + \log(qH) + \frac{1}{\ee} \right)
.\end{aligned}
\end{equation}
Si~$\chi$ est réel et si~$L(s, \chi)$ a dans la région~\eqref{l_partiel_rsz} un unique zéro~$\beta$, qui est réel, alors
\begin{equation}\label{estim_l_partiel_sieg}
\begin{aligned}
\sum_{n \leq y} \frac{\Lambda(n) \chi(n)}{n^s} = - \frac{y^{\beta-s}-1}{\beta-s} + 
O\left( \frac{y^{1-\sigma-c_2\ee}}{1-\sigma} + \frac{y^{1-\sigma} \log^2(qyH)}{(1-\sigma)H} + \log(qH) + \frac{1}{\ee} \right)
.\end{aligned}
\end{equation}
Enfin, si la fonction~$\zeta$ n'a pas de zéro dans la région~\eqref{l_partiel_rsz}, alors
\begin{equation}\label{estim_l_partiel_pcp}
\begin{aligned}
\sum_{n \leq y} \frac{\Lambda(n)}{n^s} = \frac{y^{1-s}-1}{1-s} + 
O\left( \frac{y^{1-\sigma-c_2\ee}}{1-\sigma} + \frac{y^{1-\sigma} \log^2(yH)}{(1-\sigma)H} + \log H + \frac{1}{\ee} \right),
\end{aligned}
\end{equation}
\end{lemme}

\begin{proof}
L'estimation~\eqref{estim_l_partiel_norm} découle d'un cas particulier de~\cite[Lemma~1]{HarperBV2012}.
L'estimation~\eqref{estim_l_partiel_pcp} est à rapprocher de~\cite[Lemma~8]{TeneHild86}.
Les cas complémentaires n'apportent pas de difficulté essentielle.
Par souci de complétude on en reprend ici la démonstration, qui suit celle de Harper~\cite[Lemma~1]{HarperBV2012}.
Afin d'unifier les calculs dans les différents cas, on se donne~$\chi$ un caractère primitif de module~$q\geq 1$ qui peut être le caractère
trivial, et suivant les cas :
\begin{itemize}
\item lorsque~$\chi=\bfUn$, on note~$\theta(\chi) := -1$ et~$\beta_\chi := 1$,
\item sinon, si~$L(s, \chi)$ ne s'annule pas dans la région~\eqref{l_partiel_rsz}, on pose~$\theta(\chi):=0$,
\item enfin, si~$\chi$ est réel et si~$L(s, \chi)$ s'annule une seule fois dans la région~\eqref{l_partiel_rsz} en~$s=\beta$,
on pose~$\theta(\chi):=1$ et~$\beta_\chi := \beta$.
\end{itemize}
La quantité~$\beta_\chi$ n'interviendra pas dans les calculs lorsque~$\theta(\chi)=0$. On note
\[ S_s(y) := \sum_{n \leq y} \Lambda(n) \chi(n) n^{-s} . \]
La majoration triviale~$S_s(y) \ll y^{1-\sigma}/(1-\sigma)$ montre que l'on peut supposer~$\ee \geq 1/\log y$.
D'autre part, sans perte de généralité on suppose que~$s$ n'est pas un zéro de~$L(s, \chi)$ et est différent de~$0$.

On note~$F(s, \chi) := L(s, \chi)(s-\beta_\chi)^{-\theta(\chi)}$ et on rappelle les faits suivants,
énoncés dans~\cite[chapitres~15 et~16]{davenport2000} :
\begin{itemize}
\item $F$ est une fonction entière de~$s$ dont les seuls zéros sont d'une part les zéros triviaux, qui sont des entiers négatifs ou nuls,
et les zéros non triviaux, de parties réelles dans~$[0, 1]$,
\item le nombre de zéros~$\rho = \beta+i\gamma$ de~$F$ avec~$\beta \in[0,1]$ et~$|\gamma|\leq T$ vaut
\[ \frac{T}{\pi}\log\left(\frac{qT}{2\pi}\right) - \frac{T}{\pi} + O(\log(qT)) ,\]
\end{itemize}

Enfin, si~$\chi$ est non trivial et~$\chi(-1)=1$, on pose~$\alpha(\chi)=1$, et~$\alpha(\chi)=0$ dans tous les autres cas. Ainsi,
$\alpha(\chi)=1$ si et seulement si~$L(0, \chi)=0$.
Une formule de Perron~\cite[Corollaire II.2.4]{Tene2007} ainsi que des estimations classiques
concernant la densité verticale des zéros de~$L(s, \chi)$ (voir par exemple~\cite[chapitres~17 et~19]{davenport2000}) fournissent
\begin{equation}\label{perron_S}
\begin{aligned}
S_s(y) = &\ -\sum_{\substack{\rho \\ |\Im(\rho)-\tau| \leq H/2}} \frac{y^{\rho-s}}{\rho-s} - \theta(\chi) \frac{y^{\beta_\chi-s}-1}{\beta_\chi - s}
+ \alpha(\chi)\frac{y^{-s}}{s} - \frac{F'}{F}(s, \chi) \\
&\ + O\left(y^{-\sigma} + \frac{y^{1-\sigma} \log^2(q y H)}{H}\right)
\end{aligned}
\end{equation}
où~$\rho$ dans la première somme désigne un zéro non trivial de~$F(s, \chi)$.

On suppose dans un premier temps~$1-\sigma \leq \ee/2$. Alors
\begin{equation}\label{S_majo_part1}
S_s(y) + \theta(\chi)\frac{y^{\beta_\chi-s}-1}{\beta_\chi-s} \ll \sum_{\substack{\rho = \beta+i\gamma \\ |\gamma| \leq H}} \frac{y^{\beta-\sigma}}{|\rho-s|}
+ \left|\frac{F'}{F}(s, \chi)\right| + 1 + \frac{y^{1-\sigma} \log^2(q y H)}{H}
.\end{equation}
On a
\[ \frac{F'}{F}(s, \chi) \ll \left| \frac{F'}{F}(1+\ee + i\tau, \chi) \right| + \ee \max_{\sigma \leq \kappa \leq 1+\ee}
\left|\bigg(\frac{F'}{F}\bigg)'(\kappa+i\tau, \chi)\right| .\]
En dérivant une formule explicite pour $L'/L(s, \chi)$ (voir par exemple~\cite[chapitre~12, formule~(17)]{davenport2000}), on obtient
\begin{equation}\label{expr_somme_zeros}
\left(\frac{F'}{F}\right)'(\kappa+i\tau,\chi) = -\sum_{\rho}{\frac{1}{(\kappa+i\tau-\rho)^2}}
- \sum_{\substack{m\in \bfZ, m\leq 0 \\ L(m, \chi) = 0}} \frac{1}{(\kappa+i\tau-m)^2}
\end{equation}
où, dans la première somme, $\rho$ désigne un zéro non trivial de~$L(s, \chi)$, sauf éventuellement~$\beta_\chi$.
On a~$\kappa \gg 1$, la seconde somme est donc~$O(1)$. Dans la première somme sur~$\rho = \beta + i\gamma$,
\begin{itemize}
\item la contribution de ceux vérifiant~$|\gamma| > H$ est
\[ \ll \sum_{\substack{\rho \\ |\gamma|> H}} \frac{1}{|\gamma|^2} \ll \frac{\log(qH)}{H} \]
grâce par exemple à~\cite[formules~(1) des chapitre~15 et~16]{davenport2000},
\item la contribution de ceux vérifiant~$|\gamma| \leq H$ et~$|\tau-\gamma|>1$ est~$O(\log(qH))$
grâce à~\cite[formules~(3) des chapitres~15 et~16]{davenport2000},
\item la contribution de ceux vérifiant~$|\tau - \gamma| \leq 1$ est
\begin{align*}
\leq \sum_{\substack{\rho \\ |\gamma - \tau| \leq 1}} \frac{1}{|1+\ee + i\tau - \rho|^2} 
\leq \frac{1}{\ee} \Re\Bigg( \sum_{\substack{\rho \\ |\gamma - \tau| \leq 1}} \frac{1}{1+\ee + i\tau - \rho} \Bigg)
\ll \frac{1}{\ee^2} + \frac{\log(qH)}{\ee}
\end{align*}
en suivant les mêmes calculs que Harper~\cite[démonstration du lemma~3]{harper2012paper} et en notant que dans le cas~$\theta(\chi) \neq 0$,
on a~$1/(1+\ee+i\tau - \beta_\chi) \ll 1/\ee$.
\end{itemize}
On obtient donc $F'/F(s, \chi) \ll \ee^{-1} + \log(qH)$. Il reste à majorer la somme sur~$\rho$ du membre de droite de~\eqref{S_majo_part1}.
On utilise pour cela la majoration suivante, qui découle de~\cite[formule~(1.1)]{huxley1974} et~\cite[formule~(1.8)]{jutila1977},
\begin{equation}\label{log_free_bound} \card\Big\{ \rho = \beta + i\gamma \in \bfC\ \Big| \ \prod_{r\leq q} \prod_{\chi' \mod{r}} L(\rho, \chi') = 0, \beta \geq 1-\delta, |\gamma| \leq H \Big\}
\ll (qH)^{c_3 \delta} \end{equation}
pour une certaine constante~$c_3>0$, uniformément pour~$\delta\in[0, 1/2]$. On a ainsi
\begin{align*}
\sum_{\substack{\rho = \beta+i\gamma \\ |\gamma| \leq H}} \frac{y^{\beta-\sigma}}{|\rho-s|}
\ll &\ \sum_{\substack{\rho = \beta+i\gamma \\ \beta \leq 1/2, |\gamma| \leq H}} \frac{y^{1/2-\sigma}}{1+|\gamma-\tau|} +
\sum_{k=1}^{\floor{1/(2\ee)}} \sum_{\substack{\rho = \beta+i\gamma \\ k\ee \leq 1-\beta < (k+1)\ee \\ |\gamma| \leq H}} \frac{y^{1-\sigma-k\ee}}{\ee} \\
\ll &\ y^{1/2-\sigma} \log^2(qH) + \frac{y^{1-\sigma-c_2 \ee}}{1-\sigma}
\end{align*}
pour une certaine constante~$c_2>0$, quitte à supposer~$c_1>c_3$. Cela fournit la majoration annoncée dans le cas~$1-\sigma \leq \ee/2$.

Dans le cas~$1-\sigma > \ee/2$, on a par une intégration par parties
\begin{equation}\label{S_ipp}
S_s(y) = S_s(\sqrt{y}) + S_{i\tau}(y)y^{-\sigma} - S_{i \tau}(\sqrt{y})y^{-\sigma/2}  + \sigma \int_{\sqrt{y}}^y S_{i\tau}(t) t^{-\sigma-1} \dd t
.\end{equation}
Soit~$t\in[\sqrt{y}, y]$ ; on a~$t \geq (qH)^{c_1/2}$. Il est nécessaire de distinguer le cas~$\theta(\chi)=1$ car alors~$F(1-\beta_\chi, \chi)=0$.
On note donc
\[ \bfUn_{\theta=1} :=
\begin{cases} 1 & \mbox{ si } \theta(\chi)=1 \\
0 & \mbox { sinon.}
\end{cases} \] On suppose également sans perte de généralité que~$\tau\neq 0$.
Il découle de la formule~\eqref{perron_S} avec~$s$ et~$y$ remplacés respectivement par~$i\tau$ et~$t$ que
\begin{equation}\label{majo_S_part2}
\begin{aligned}
&\ S_{i\tau}(t) + \theta(\chi)\frac{t^{\beta_\chi-i\tau}-1}{\beta_\chi-i\tau} \\
\ll &\ \sum_{\substack{\rho = \beta+i\gamma \\ \rho \neq 1-\beta_\chi \\ |\gamma| \leq H}} \frac{t^{\beta}}{|\rho-i\tau|}
+ \left|\alpha(\chi)\frac{t^{-i\tau}}{i\tau} - \frac{F'}{F}(i\tau, \chi)
- \bfUn_{\theta=1} \frac{t^{1-\beta_\chi-i\tau}}{1-\beta_\chi-i\tau} \right|
+ 1 + \frac{t \log^2(q t H)}{H}
\end{aligned}
\end{equation}
où dans la somme sur~$\rho$ la condition~$\rho \neq 1-\beta_\chi$ n'est à prendre en compte que lorsque~$\theta(\chi)=1$.
D'après~\cite[formules~(10.27), (12.9) et Theorem~11.4]{montgomery2006multiplicative}, on a
\begin{align*}
&\ \left| \alpha(\chi)\frac{t^{-i\tau}}{i\tau} - \frac{F'}{F}(i\tau, \chi)
- \bfUn_{\theta=1} \frac{t^{1-\beta_\chi-i\tau}}{1-\beta_\chi-i\tau} \right| \\
\leq &\ \left| \frac{L'}{L}(1-i\tau, \overline{\chi}) - \frac{\bfUn_{\theta=1}}{1-i\tau-\beta_\chi} \right|
+ \left|\bfUn_{\theta=1} \frac{t^{1-i\tau-\beta_\chi}-1}{1-i\tau-\beta_\chi} \right|
+ \left|\alpha(\chi)\frac{t^{-i\tau}-1}{i\tau}\right| + O(\log(qH)) \\
\ll &\ \log(q y H) + \sqrt{t}\log t.
\end{align*}
Enfin, pour tout~$\rho=\beta+i\gamma$ zéro non trivial de~$F(s, \chi)$, sauf éventuellement~$1-\beta_\chi$, on a~$\beta\geq \ee$, ainsi
\begin{align*}
\sum_{\substack{\rho = \beta+i\gamma \\ \rho \neq 1-\beta_\chi \\ |\gamma| \leq H}} \frac{t^{\beta}}{|\rho-i\tau|}
\ll &\ \sum_{\substack{\rho = \beta+i\gamma \\ |\gamma| \leq H \\ \beta \leq 1/4}} \frac{t^{1/4}}{\ee+|\gamma-\tau|} +
\sum_{\substack{\rho = \beta+i\gamma \\ |\gamma| \leq H \\ 1/4 < \beta < 1/2}} \frac{t^{1/2}}{1+|\gamma-\tau|} +
\sum_{k=1}^{\floor{1/(2\ee)}} \sum_{\substack{\rho = \beta+i\gamma \\ |\gamma| \leq H \\ k\ee \leq 1-\beta < (k+1)\ee}} t^{1-k\ee} \\
\ll &\ \sqrt{t} \log^2(qH) + t \left(\frac{(qH)^{c_3}}{t}\right)^{\ee} \ll t^{1-c_2 \ee}
\end{align*}
en supposant~$c_1>2c_3$ et quitte à réduire la valeur de~$c_2$, et où l'on a de nouveau utilisé
des résultats classiques sur la densité des zéros de~$L(s, \chi)$~\cite[formules~(1) des chapitres~15 et~16]{davenport2000}.
On a donc
\[ S_{i\tau}(t) + \theta(\chi)\frac{t^{\beta_\chi-i\tau}-1}{\beta_\chi-i\tau} \ll t^{1-c_2\ee} + \log(qyH) + \frac{t \log^2(qyH)}{H} \]
et ainsi, en reportant dans~\eqref{S_ipp},
\begin{align*}
&\ S_s(y) + \theta(\chi)\frac{y^{\beta_\chi-s}-1}{\beta_\chi-s} \\
\ll &\ \left|\theta(\chi) \frac{y^{(\beta_\chi-s)/2}-1}{\beta_\chi-s}\right| +  \frac{y^{(1-\sigma)/2}}{1-\sigma}
+ \frac{y^{1-\sigma-c_2\ee}}{1-\sigma-c_2\ee} + y^{-\sigma/2} \log(qyH)
 + \frac{y^{1-\sigma} \log^2(qyH)}{(1-\sigma) H}
.\end{align*}
Dans le membre de droite, le premier terme est dominé par le deuxième.
En utilisant l'inégalité~$1-\sigma > \ee/2$ et en observant que~$y^{(1-\sigma)/2}/(1-\sigma) \gg \log y$, on obtient la majoration annoncée.
\end{proof}
\begin{remarque}
Ainsi qu'il est observé dans la remarque qui suit le lemme~2 de~\cite{HarperBV2012}, dans la démonstration qui précède,
la majoration~\eqref{log_free_bound} en conjonction avec l'hypothèse~$y \geq (qH)^{c_1}$, remplace avantageusement les résultats classiques
sur la densité verticale des zéros des fonctions~$L$~\cite[chapitres~17 et~19]{davenport2000}.
L'utilisation de ceux-ci induirait un facteur supplémentaire~$\log^2(qyH)$ dans le premier terme d'erreur
de chacune des estimations~\eqref{estim_l_partiel_norm}, \eqref{estim_l_partiel_sieg} et \eqref{estim_l_partiel_pcp}
et rendrait celles-ci triviales lorsque~$\ee = O((\log\log qyH)/ \log y)$.
Des valeurs permises pour~$\ee$ dépendent le choix des paramètres~$\ee$ et~$T$ dans les Propositions~\ref{caracts_norm},
\ref{caracts_pcp} et~\ref{caracts_sieg} \emph{infra}, qui influent sur le domaine de validité en~$Q$ ainsi que la qualité des termes d'erreur.
\end{remarque}


\begin{lemme}\label{majo_l_decal}
Il existe des constantes~$c_1, c_2 > 0$ positives, $c_2$ pouvant être fixée arbitrairement petite,
telles que pour tous réels~$x, y, T$ supérieurs à~$4$, $\ee>0$ et tout entier~$q\geq 2$, sous les conditions~:
\begin{itemize}
\item $(\log x)^{c_1} \leq y \leq x$,
\item $qT \leq y^{c_2}$,
\item $\ee \log y \geq 1/c_2$,
\item $T \geq y^{c_1 \ee} (\log x)^2$,
\end{itemize}
et pour tout caractère~$\chi$ de module~$q$ tel que la fonction~$L(s, \chi)$ ne s'annule pas
pour~$\sigma \geq 1-\ee$ et~$|\tau|\leq 2T$, la majoration
\[ \frac{L(\sigma+i\tau, \chi ; y)}{L(\sigma'+i\tau, \chi ; y)} \ll x^{(\sigma'-\sigma)/2} \]
soit valable lorsque~$(\sigma, \sigma', |\tau|) \in [\alpha-c_2 \ee, \alpha]^2 \times [0, T]$ et~$\sigma\leq \sigma'$.

En particulier, cette majoration est valable avec~$\ee = b/\log QT$ pour tout~$Q \geq 2$ vérifiant~$QT \leq y^{c_2}$, lorsque~$\chi$
est un caractère de module~$q\leq Q$ qui n'est pas~$Q$-exceptionnel.
De plus, elle est également valable lorsque~$\chi$ est~$Q$-exceptionnel et l'une des deux conditions suivantes est vérifiée :
\[ |\tau| \geq \max\{1, y^{\beta-\sigma}\} \qquad \text{ ou } \qquad \beta \leq 1-\sqrt{c_2}/\log QT .\]

D'autre part, sous les conditions :
\begin{itemize}
\item $(\log x)^{c_1} \leq y \leq x$,
\item $y^{c_1(\log T)^{-2/3}(\log \log T)^{-1/3}} (\log x)^2 \leq T \leq y^{c_2}$,
\end{itemize}
la majoration
\[ \frac{\zeta(\sigma+i\tau, y)}{\zeta(\alpha+i\tau, y)} \ll x^{(\alpha-\sigma)/2} \]
est valable lorsque~$(\sigma, |\tau|) \in [\alpha-c_2 (\log T)^{-2/3}(\log \log T)^{-1/3}, \alpha] \times [y^{1-\alpha}, T]$.
\end{lemme}

\begin{proof}
Afin d'unifier les calculs dans les différents cas, on se donne un caractère~$\chi$, qui est soit non principal et de module~$q\geq 2$,
soit le caractère trivial auquel cas l'on pose~$q := 1$, et on note suivant les cas :
\begin{itemize}
\item si~$\chi = \bfUn$, on pose~$\beta_\chi := 1$, $\sigma'=\alpha$ et~$\ee = b(\log T)^{-2/3}(\log \log T)^{-1/3}$,
\item si~$\chi$ est un caractère $Q$-exceptionnel, on pose~$\beta_\chi := \beta$ et~$\ee = b/ \log QT$.
\end{itemize}
Ainsi~$L(s, \chi)$ est une fonction qui n'a pas de zéro ni de pôle pour~$\sigma\geq 1-\ee$ et~$|\tau|\leq 2T$,
sauf éventuellement en~$s=\beta_\chi$. Dans le cas~$\chi=\bfUn$, ceci découle de la région sans zéro
de Vinogradov-Korobov~\cite[formule~(6.24)]{montgomery2006multiplicative} quitte à réduire la valeur de~$b$.
Quitte à choisir~$c_1$ suffisamment grande et~$c_2$ suffisamment petite on a~$\sigma' \geq \sigma \geq 1/2$.
On note~$\chi^*$ le caractère primitif associé à~$\chi$ et~$q^*$ son module ; on a pour tout~$s\in \bfC$,
\[ L(s, \chi ; y) = \prod_{p|q}(1-\chi^*(p)p^{-s}) L(s, \chi^* ; y) .\]
Lorsque~$q=1$ et~$\chi=\bfUn$, le produit sur~$p$ est vide,
et dans les autres cas, sa dérivée logarithmique par rapport à~$s$
est~$\ll \sum_{p|q}(\log p)/(1-p^{-\Re(s)}) \ll \log q$ lorsque~$\Re(s) \geq 1/2$.
On a donc dans tous les cas
\[ \frac{L(\sigma+i\tau, \chi ; y)}{L(\sigma'+i\tau, \chi ; y)} =
\exp\left\{ -\int_\sigma^{\sigma'} \frac{L'}{L}(\kappa+i\tau, \chi^* ; y) \dd \kappa + O(1) \right\} \]
avec, pour tout~$\kappa \in [\sigma, \sigma']$,
\[ -\frac{L'}{L}(\kappa + i\tau, \chi^* ; y) = \sum_{P(n) \leq y} \frac{\Lambda(n)\chi^*(n)}{n^{\kappa+i\tau}}
= \sum_{n\leq y}\frac{\Lambda(n)\chi^*(n)}{n^{\kappa+i\tau}} + O(1) .\]
Le Lemme~\ref{majo_l_partiel} s'applique avec~$H=2T$.
Lorsque~$\chi$ est non exceptionnel, le Lemme~\ref{majo_l_partiel} fournit
\begin{align*}
\sum_{n \leq y} \frac{\Lambda(n)\chi^*(n)}{n^{\kappa+i\tau}}
\ll &\ \frac{y^{1-\kappa-c_3\ee}}{1-\kappa} + \frac{y^{1-\kappa} \log^2(qyT)}{(1-\kappa)T} + \log(qT) \\
\ll &\ \frac{y^{1-\alpha-c_3\ee/2}}{1-\alpha} + \log(qT)
\end{align*}
pour une certaine constante~$c_3>0$, quitte à supposer~$c_1$ suffisamment grande et~$c_2$ suffisamment petite.
Si~$\chi$ est exceptionnel ou~$\chi=\bfUn$, on a
\[ \sum_{n \leq y} \frac{\Lambda(n)\chi^*(n)}{n^{\kappa+i\tau}}
\ll \frac{y^{1-\alpha-c_3\ee/2}}{1-\alpha} + \log(qT) + \left|\frac{y^{\beta_\chi-\kappa-i\tau}-1}{\beta_\chi-\kappa-i\tau}\right| \]
Lorsque~$|\tau| \geq y^{\beta_\chi-\sigma}$, le dernier terme du membre de droite est borné,
tandis que lorsque~$\chi$ est $Q$-exceptionnel et~$\beta \leq 1-\sqrt{c_2}/\log QT$,
ce terme est~$O(y^{1-\kappa-\sqrt{c_2}\ee/b}/(1-\kappa))$.

Ainsi dans tous les cas, quitte à réduire la valeur de~$c_2$, on a
\begin{align*}
\sum_{n \leq y} \frac{\Lambda(n)\chi^*(n)}{n^{\kappa+i\tau}} &\ \ll \frac{y^{1-\alpha-\sqrt{c_2}\ee/b}}{1-\alpha} + \log(QT) \\ &\ \ll
\begin{cases} \left(\e^{-\sqrt{c_2} \ee \log y / b} + \frac{\log(QT)}{\log x}\right)\log x & \text{ si~$\chi \neq \bfUn$ } \\
\left(\e^{-c_2^{-1/6} (\log y)^{1/3}/(\log\log y)^{1/3}} + \frac{\log(QT)}{\log x}\right)\log x & \text{ si~$\chi = \bfUn$ } 
\end{cases}
\end{align*}
grâce à~\cite[formule~(III.5.74)]{Tene2007}. Quitte à supposer~$c_1$ suffisamment grande et~$c_2$ suffisamment petite, on en déduit
\[ \left| \sum_{P(n) \leq y} \frac{\Lambda(n)\chi^*(n)}{n^{\kappa+i\tau}} \right| \leq \frac{\log x}{2} + O(1) \]
donc~$L(\sigma+i\tau, \chi ; y) / L(\sigma'+i\tau, \chi ; y) = O(x^{(\sigma'-\sigma)/2})$.
\end{proof}


Le lemme suivant traite de la situation où le zéro exceptionnel existe.
La démonstration est analogue à celle de~\cite[lemme~1]{tenenbaum1985probleme}.
\begin{lemme}\label{estim_lsieg}
Il existe des constantes~$c_1, c_2$ strictement positives telles que pour tous réels~$Q, T$ supérieurs à~$2$
et~$x$ et~$y$ assez grands avec :
\begin{itemize}
\item $(\log x)^{c_1} \leq y \leq x$,
\item $QT \leq  y^{c_2/(\log\log\log x)}$,
\item $T \geq y^{c_1/\log(QT)} (\log x)^2$,
\end{itemize}
si le zéro exceptionnel~$\beta$ existe et vérifie~$1-\beta \leq \sqrt{c_2}/\log QT$,
alors pour tout~$\tau$ avec~$|\tau| \leq T/2$ on ait
\[ L(\alpha+i\tau+\beta-1, \chi_1 ; y) \ll \zeta(\alpha, y) H(u)^{-\delta} .\]
\end{lemme}

\begin{proof}
Quitte à choisir~$c_1$ suffisamment grande, on suppose~$\alpha \geq 2/3$. Alors on a
\begin{align*}
L(\alpha+i\tau+\beta-1, \chi_1 ; y) = &\ \zeta(\alpha, y)
\exp\left\{ \sum_{p \leq y} \log\left(\frac{1-p^{-\alpha}}{1-\chi_1(p)p^{1-\beta-\alpha-i\tau}}\right) \right\} \\
= &\ \zeta(\alpha, y) \exp\left\{ -\sum_{p \leq y}\frac{1 - \chi_1(p)p^{1-\beta-i\tau}}{p^\alpha} + O(1) \right\}
\end{align*}
le logarithme étant pris en détermination principale. La somme sur~$p$ vérifie la minoration
\[ \sum_{p \leq y}\frac{1 - \chi_1(p)p^{1-\beta}\cos(\tau\log p)}{p^\alpha}
\geq O(1) + \frac{1}{\log y} \sum_{n\leq y} \frac{\Lambda(n)(1 - \chi_1(n)n^{1-\beta}\cos(\tau\log n))}{n^\alpha} .\]
Le Lemme~\ref{majo_l_partiel} appliqué deux fois avec~$\ee = b/\log QT$, $H=2T$ et~$s \in\{\alpha,\alpha+\beta-1\}$
fournit pour une certaine constante~$c_3>0$
\[ \sum_{n\leq y}\frac{\Lambda(n)}{n^\alpha} = \frac{y^{1-\alpha}}{1-\alpha} +
O\left( \frac{y^{1-\alpha - c_3\ee}}{1-\alpha} + \frac{y^{1-\alpha}\log^2(QyT)}{(1-\alpha)T} + \log(QT) \right) \]
\[ \sum_{n\leq y}\frac{\Lambda(n)\chi_1(n)}{n^{\alpha+i\tau+\beta-1}} = -\frac{y^{1-\alpha-i\tau}}{1-\alpha-i\tau}
 + O\left( \frac{y^{2-\alpha-\beta-c_3\ee}}{2-\alpha-\beta} + \frac{y^{2-\alpha-\beta}\log^2(QyT)}{(2-\alpha-\beta)T} + \log(QT) \right) .\]
Quitte à choisir~$c_1$ suffisament grande, dans les termes d'erreur, le deuxième terme est dominé par le premier. On a
\[ \Re\left\{\frac{y^{1-\alpha}}{1-\alpha} + \frac{y^{1-\alpha-i\tau}}{1-\alpha-i\tau}\right\} \gg \frac{\log x}{(\log (u+1))^2} \]
grâce aux calculs de~Hildebrand et~Tenenbaum~\cite[Lemma~8]{TeneHild86}.
D'autre part, quitte à supposer~$c_2$ suffisament petite, on a~$1-\beta \leq c_3\ee/2$,
or~$y^{-c_3\ee/2} \leq (\log \log x)^{-c_3b/(2c_2)}$ et~$\log(QT) \ll \log x/ (u \log\log\log x)$,
ainsi pour un certain~$\delta>0$ et~$x$ et~$y$ assez grands, quitte à choisir~$c_2$ suffisamment petite on obtient
\[ L(\alpha+i\tau+\beta-1, \chi_1 ; y) \ll \zeta(\alpha, y) \exp\left\{ -\delta\frac{u}{(\log (u+1))^2} \right\} \]
qui est la majoration annoncée.
\end{proof}

\subsection{Caractères non principaux, non exceptionnels}

On s'intéresse au cas des caractères non principaux et non associés à l'éventuel caractère exceptionnel $\chi_1$.
Soit $\chi$ un tel caractère, de module $q$. On rappelle que~$\Psi_0(z, y ; \chi, \gamma)$ est la fonction définie par~\eqref{def_Psi0}.

\begin{prop}\label{caracts_norm}
Il existe des constantes~$c_1$ et~$c_2$ strictement positives telles que pour tous réels~$x, y, \gamma, \ee$ et~$T$
avec~$4 \leq T \leq y^{c_2}$ et~$\ee>0$, lorsque~$q$ et un entier avec~$2\leq q \leq y^{c_2}$ et $\chi$ un caractère de module~$q$, non principal
et tel que la fonction~$L(s, \chi)$ ne s'annule pas dans la région
\[ \{ s \in \bfC \mid \sigma > 1-\ee, |\tau| \leq T \} \]
et lorsque~$2\leq (\log x)^{c_1} \leq y \leq x$, et $z\in[x^{2/3}, x]$ on ait
\[ \Psi_0(z, y ; \chi , \gamma) \ll z^\alpha \zeta(\alpha, y) (1+|\gamma|z) \left( (\log T) x^{-c_2 \ee} + T^{-c_2} \right) .\]
En particulier, pour tout~$Q$ avec~$2 \leq Q \leq y^{c_2}$, ceci est valable pour touts les caractères de module inférieurs à~$Q$
qui sont non principaux et non~$Q$-exceptionnels, avec~$\ee = b/\log QT$.
De plus la même majoration est valable lorsque~$\chi$ est~$Q$-exceptionnel mais~$\beta \leq 1-\sqrt{c_2}/\log QT$.
\end{prop}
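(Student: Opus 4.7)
Le plan est d'appliquer la m\'ethode du col via le Lemme~\ref{perron_phi}, en tirant parti de la majoration de $L(s, \chi ; y)$ sur une droite verticale l\'eg\`erement \`a gauche de $\sigma = \alpha$ fournie par le Lemme~\ref{majo_l_decal}. Je commencerais par appliquer le Lemme~\ref{perron_phi} avec $a_n = \chi(n)$, $\kappa = \alpha$ et la valeur de~$T$ de l'\'enonc\'e ; ses hypoth\`eses sont satisfaites via la remarque qui le suit (la condition $y \geq (\log x)^{c_1}$ assurant $\alpha \gg 1$ pour $c_1$ suffisamment grand), ce qui fournit
\[ \Psi_0(z, y ; \chi, \gamma) = \frac{1}{2i\pi}\int_{\alpha-iT}^{\alpha+iT} L(s, \chi ; y) z^s \cPhi(\gamma z, s) \dd s + O\bigl((1+|\gamma z|)(\log T) z^\alpha \zeta(\alpha, y) T^{-\alpha/2}\bigr). \]

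Puisque $\chi$ est non principal, la fonction $s \mapsto L(s, \chi ; y)$ est holomorphe dans le demi-plan $\sigma > 0$, et l'on peut d\'eplacer le chemin d'int\'egration jusqu'\`a $\sigma = \alpha - c_2 \ee$ (o\`u $c_2$ est la constante du Lemme~\ref{majo_l_decal}) sans rencontrer de p\^ole. La conjonction de ce lemme et de la majoration triviale $|L(\alpha + i\tau, \chi ; y)| \leq \zeta(\alpha, y)$ fournit
\[ |L(\sigma+i\tau, \chi ; y)| \ll x^{(\alpha-\sigma)/2} \zeta(\alpha, y) \qquad \bigl(\sigma \in [\alpha-c_2\ee, \alpha],\ |\tau| \leq T\bigr). \]
J'utiliserais ensuite le Lemme~\ref{majo_phi}, en d\'ecoupant l'int\'egrale en $\tau$ selon la position de $|\tau|$ par rapport \`a $1+|\gamma z|$, pour obtenir la borne $\int_{-T}^T |\cPhi(\gamma z, \alpha-c_2\ee+i\tau)| \dd\tau \ll (1+|\gamma z|)\log T$. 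La contribution du chemin vertical d\'ecal\'e est alors born\'ee par $x^{c_2\ee/2} z^{\alpha-c_2\ee} \zeta(\alpha, y) (1+|\gamma z|) \log T$, tandis que celle des segments horizontaux \`a hauteur $\pm T$, obtenue par le m\^eme argument coupl\'e \`a la majoration $|\cPhi(\gamma z, s)| \ll (1+|\gamma z|)/|s|$, vaut $\ll z^\alpha \zeta(\alpha, y) (1+|\gamma z|)/T$.

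L'observation cruciale, qui justifie l'hypoth\`ese technique $z \geq x^{2/3}$, est l'in\'egalit\'e $x^{c_2\ee/2} z^{-c_2\ee} \leq x^{-c_2\ee/6}$ : le d\'ecalage produit un gain polynomial en $x^{-c_2\ee/6}$ que j'absorberais dans la constante $c_2$ par renommage. En sommant les diff\'erentes contributions, le terme r\'esiduel $(\log T) T^{-\alpha/2}$ provenant du Lemme de Perron est domin\'e par $T^{-c_2}$ pour $c_2$ suffisamment petit (gr\^ace \`a $\alpha \gg 1$), ce qui donne la majoration annonc\'ee. La derni\`ere partie de l'\'enonc\'e r\'esulte de l'application directe du Lemme~\ref{majo_l_decal} avec $\ee = b/\log QT$. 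L'\'etape la plus d\'elicate de ce plan est la mise en action conjointe des estimations du Lemme~\ref{majo_l_decal} et du Lemme~\ref{majo_phi} sur les trois morceaux du nouveau chemin, le gain ne d\'ependant que de la mince marge $c_2\ee$ offerte par la r\'egion sans z\'ero suppos\'ee.
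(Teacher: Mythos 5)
Votre plan reproduit pour l'essentiel la d\'emonstration du texte : formule de Perron (Lemme~\ref{perron_phi}) avec $\kappa=\alpha$, d\'eplacement du contour sur la droite $\sigma=\alpha-c\ee$ justifi\'e par l'holomorphie du produit eul\'erien fini, majoration de $L(s,\chi;y)$ via le Lemme~\ref{majo_l_decal} combin\'ee \`a $|L(\alpha+i\tau,\chi;y)|\leq\zeta(\alpha,y)$, contr\^ole de $\cPhi$ par le Lemme~\ref{majo_phi}, et exploitation de $z\geq x^{2/3}$ pour transformer $(\sqrt{x}/z)^{c\ee}$ en gain $x^{-c\ee/6}$ ; les cas particuliers ($\ee=b/\log QT$, caract\`ere exceptionnel avec $\beta$ petit) sont trait\'es comme dans le papier par les clauses finales du Lemme~\ref{majo_l_decal}. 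C'est correct et il n'y a pas de divergence de m\'ethode notable.
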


\begin{remarque} Lorsque~$\gamma=0$, ce résultat est un cas particulier de~\cite[theorem~3]{HarperBV2012}.\end{remarque}

\begin{proof}
La condition sur~$(x, y)$ assure que les hypothèses du Lemme~\ref{perron_phi}
sont vérifiées pour la suite~$a_n = \e(n\gamma) \chi(n)$ avec~$M$ absolu. On a de plus~$\alpha \geq 1/2$ quitte à supposer~$c_1$ assez grande.
Le choix~$\kappa = \alpha(x, y)$ fournit pour une certaine constante~$c_3>0$
\begin{align*}
\Psi_0(z, y ; \chi, \gamma) =&\ \frac{1}{2 i\pi} \int_{\alpha-iT}^{\alpha+iT}{L(s, \chi ; y) z^s \cPhi(\gamma z, s)\dd s}
+ O( z^\alpha \zeta(\alpha, y) (1+|\gamma|z) T^{-c_3} )
.\end{align*}
On modifie le contour pour intégrer sur la ligne brisée passant par les points
\[ \alpha-iT, \quad \alpha-c_4\ee-iT, \quad  \alpha-c_4\ee+iT, \quad \alpha+iT \]
où~$c_4$ est une constante absolue choisie plus petite que la constante~$c_2$ du Lemme~\ref{majo_l_decal}.
Soit $I_1$ la contribution des deux segments horizontaux et $I_2$ la contribution du segment vertical.
Les Lemmes~\ref{majo_phi} et~\ref{majo_l_decal} s'appliquent ici dans tous les cas envisagés dans l'énoncé. On a ainsi
\begin{align*}
I_1 &\ \ll z^\alpha \zeta(\alpha, y) \frac{(1+|\gamma|z)}{T} \int_{0}^{c_4\ee} \left(\frac{\sqrt{x}}{z}\right)^{\kappa} \dd \kappa  \\
&\  \ll z^\alpha \zeta(\alpha, y) \frac{(1+|\gamma|z)}{T},
\end{align*}
\begin{align*}
I_2 &\ \ll z^\alpha \zeta(\alpha, y) (1+|\gamma|z) (\log T) \left(\frac{\sqrt{x}}{z}\right)^{c_4\ee} \\
&\ \ll z^\alpha \zeta(\alpha, y) (1+|\gamma|z) (\log T) x^{-c_4\ee/6}.
\end{align*}
On obtient la majoration annoncée en regroupant ces deux estimations.
\end{proof}


Dans la somme du membre de droite de~\eqref{E_sum_chi}, la contribution des caractères principaux s'écrit,
après interversion des sommes,
\[ V(x, y ; q, \eta) := \sum_{n \in S(x, y)}{\frac{\mu(q / (q, n))}{\vphi(q / (q, n))} \e(n \eta)} \]
et celle, lorsque $\DSZ(q) = 1$, des caractères associés à $\chi_1$ s'écrit~$\chi_1(a) W(x, y ; q, \eta)$ où
\[ W(x, y ; q, \eta) := \frac{\tau(\chi_1)}{\vphi(q_1)}
\sum_{ r | q/q_1 } {\frac{\mu(r)\chi_1(r)}{\vphi(r)} \sum_{m \in S(x q_1 r/q, y)}{ \e\left(\frac{m q}{q_1 r} \eta\right) \chi_r(m) } } .\]
On rappelle que~$T_1$ est défini en~\eqref{defs_t1_t2}.

\begin{prop}\label{prop_E_VW}
Il existe deux constantes positives~$c_1$ et~$c_2$ telles que pour tous réels~$x$, $y$ et~$\vth$ et tous entiers~$q, Q\geq 1$,
lorsque~$(\log x)^{c_1} \leq y \leq x$, $q\leq y^{c_2}$, $Q \leq T_1^{c_2}$ et~$\vth = a/q+\eta$ avec~$(a, q)=1$, on ait
\begin{equation}\label{estim_E_VW}
\begin{aligned}
E(x, y ; \vth) = &\ V(x, y ; q, \eta) + \DSZ(q)\chi_{1}(a) W(x, y ; q, \eta) \\
&\ + O\left(\Psi(x, y) (1+|\eta|x) \left(y^{-c_2} + Q^{-c_2} \right) \right)
\end{aligned}
\end{equation}
où~$\chi_1$ désigne le caractère primitif $Q$-exceptionnel.
\end{prop}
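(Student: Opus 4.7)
The plan is to substitute the character decomposition~\eqref{E_sum_chi} into $E(x,y;\vth)$ and split the inner sum over characters modulo~$q/d$ into three classes: (i)~the principal character, (ii)~the characters induced from the exceptional primitive character~$\chi_1$ (present only when $\DSZ(q)=1$), and (iii)~all remaining characters. I expect class~(i) to reconstitute $V(x,y;q,\eta)$, class~(ii) to produce $\chi_1(a)W(x,y;q,\eta)$, and class~(iii) to fit into the error term by Proposition~\ref{caracts_norm}. Note that $q\leq y^{c_2}$ forces $q$ to be $y$-friable, so the condition $P(d)\leq y$ in~\eqref{E_sum_chi} is automatically satisfied for every $d\mid q$.

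For class~(i), I would use $\chi_0(a)=1$ (since $(a,q)=1$) together with the Ramanujan-sum identity $\tau(\overline{\chi_0})=\mu(q/d)$; setting $n=md$, the constraint ``$d\mid n$ and $(n/d,q/d)=1$'' pins down $d=(n,q)$ for each $n\in S(x,y)$, and the resulting summand reproduces $\mu(q/(q,n))/\vphi(q/(q,n))\cdot\e(n\eta)$. For class~(ii), I would invoke the Gauss sum formula $\tau(\overline{\chi_r})=\mu(r)\overline{\chi_1}(r)\tau(\overline{\chi_1})$ when $(r,q_1)=1$ (and $0$ otherwise), together with $\chi_r(a)=\chi_1(a)$ and the reality of the Siegel character ($\overline{\chi_1}=\chi_1$, so $\tau(\overline{\chi_1})=\tau(\chi_1)$), to reorganise the contributions with $d=q/(q_1 r)$ and $r\mid q/q_1$ into precisely $\chi_1(a)W(x,y;q,\eta)$.

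For class~(iii), any surviving character $\chi$ modulo~$q/d$ is either non-principal and not $Q$-exceptional, or $Q$-exceptional with~$\beta\leq 1-\sqrt{c_2}/\log QT$; the $\chi_1$-associated characters with~$\beta$ close to~$1$ have already been peeled off in class~(ii). Since $q\leq y^{c_2}\leq x^{c_2}$, one has $x/d\geq x^{2/3}$, so Proposition~\ref{caracts_norm} applies with $\gamma=d\eta$, yielding
\[ \Psi_0(x/d,y;\chi,d\eta)\ll (x/d)^{\alpha}\zeta(\alpha,y)(1+|\eta|x)\bigl((\log T)x^{-c_2\ee}+T^{-c_2}\bigr). \]
Multiplying by $|\tau(\overline{\chi})|/\vphi(q/d)\ll \sqrt{q/d}/\vphi(q/d)$, summing trivially over the $\vphi(q/d)$ characters modulo~$q/d$, bounding $\sum_{d\mid q}\sqrt{q/d}\ll q^{1/2+o(1)}$, and invoking~\eqref{x_zeta_psi} to replace $x^{\alpha}\zeta(\alpha,y)$ by $(\log x)\Psi(x,y)$, the total contribution of class~(iii) becomes
\[ \ll q^{1/2+o(1)}(\log x)\,\Psi(x,y)\,(1+|\eta|x)\bigl((\log T)x^{-c_2\ee}+T^{-c_2}\bigr). \]

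The last step is to select the auxiliary parameters~$T$ and~$\ee$ to cast this into the announced form $\Psi(x,y)(1+|\eta|x)(y^{-c_2}+Q^{-c_2})$. Taking $T=T_1^{c_2}$ together with $\ee=b/\log(QT)$ will yield $T^{-c_2}\ll y^{-c_2^2}$, while $x^{-c_2\ee}$ decays fast enough (via $\log(QT)\ll \log Q+\log T_1$) to absorb the $q^{1/2+o(1)}\log x\log T$ prefactors and leave a residual $\ll Q^{-c_2'}$ after shrinking constants; the hypothesis $q\leq y^{c_2}$ is used throughout to make these absorptions free of charge. The main technical delicacy I anticipate lies precisely in the clean separation between characters within and outside the scope of Proposition~\ref{caracts_norm}: this hinges on the dichotomy $\DSZ(q)\in\{0,1\}$ and on the fact that when $\DSZ(q)=0$, no character modulo $q/d$ can be induced from~$\chi_1$, so the proposition applies uniformly to all non-principal characters.
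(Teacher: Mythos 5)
Your classes (i) and (ii) are fine: the algebraic identification of the principal-character contribution with $V(x,y;q,\eta)$ and of the $\chi_1$-induced contribution with $\chi_1(a)W(x,y;q,\eta)$ is exactly how the paper sets things up before the proposition. The gap is in class (iii). The statement allows $q\leq y^{c_2}$ while $Q\leq T_1^{c_2}$ with $T_1=\min\{y,\cL\}$, so the moduli $r=q/d$ can lie in the range $Q<r\leq y^{c_2}$, far beyond $Q$. For such $r$ the zero-free region~\eqref{RSZ} (and hence the \og non principal, non $Q$-exceptionnel \fg\ dichotomy) gives no information: $L(s,\chi)$ may well have zeros with real part within $b/\log(QT)$ of $1$ that have nothing to do with $\chi_1$, and the \og en particulier \fg\ clause of Proposition~\ref{caracts_norm} only covers characters of modulus \emph{at most} $Q$. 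So your assertion that every surviving character falls under Proposition~\ref{caracts_norm} with $\ee=b/\log(QT)$ is unjustified precisely in the regime that makes the proposition nontrivial. The paper handles this by splitting the sum over $d\mid q$ at $q/d=Q$: for $q/d\leq Q$ it argues as you do, while for $Q<r\leq y^{c_2}$ it distinguishes the characters whose $L$-function is zero-free in a fixed box $\{\sigma\geq 1-c_4,\ |\tau|\leq r^{c_3}\}$ (Proposition~\ref{caracts_norm} with a \emph{fixed} $\ee=c_4$, not $b/\log QT$) from the exceptional ones, whose number is controlled by the log-free zero-density bound~\eqref{log_free_bound} of Huxley--Jutila ($\sum_{r\leq R}N_r\ll R^{1/10}$); the latter are bounded trivially, the Gauss-sum decay $\tau(\overline\chi)/\vphi(r)\ll r^{-1/3}$ making their total contribution $\ll \Psi(x,y)(Q^{-c_2/5}+y^{-c_2/5})$. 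This zero-density ingredient is the missing idea; without it the proposition as stated (with $q$ up to $y^{c_2}$) is out of reach.

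A secondary but real problem is your absorption step. Bounding $\sum_{d\mid q}\sqrt{q/d}\ll q^{1/2+o(1)}$ and hoping that $(\log T)x^{-c_2\ee}+T^{-c_2}$ with $T=T_1^{c_2}$, $\ee=b/\log(QT)$ swallows it fails when $y$ (hence possibly $q$) is large: e.g.\ for $y=x^{1/2}$ one has $T_1=\cL$, so $T^{-c_2}$ and $x^{-c_2\ee}$ are only powers of $\cL^{-1}=\exp\{-\sqrt{\log x}\}$, whereas $q^{1/2}$ can be a power of $x$. The paper avoids this by keeping the Gauss-sum factor restricted to moduli $r\leq Q$ in the first range (so the prefactor is $\ll Q^{3/2}\leq T_1^{3c_2/2}$, harmless), and by using the $r^{-1/3}$ decay plus the density bound in the second range. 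You should restructure the estimate along these lines rather than summing trivially over all $d\mid q$.
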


\begin{proof}
La preuve que l'on propose ici reprend la structure des calculs de la section~3.3 de~\cite{HarperBV2012}.
Les caractères de modules inférieurs à~$Q$ sont traités grâce à la Proposition~\ref{caracts_norm}.
Pour les caractères de modules supérieurs, lorsque la fonction~$L$ a ses zéros de petite partie réelle, la Proposition~\ref{caracts_norm}
permet encore de conclure. Cela ne concerne pas tous les caractères de modules supérieurs à~$Q$, mais la majoration de Huxley et Jutila~\eqref{log_free_bound}
permet de dire que les caractères restants sont en proportion suffisament peu nombreux pour que leur contribution, même majorée trivialement, soit bien contrôlée.

Soient~$c'_1$ et~$c'_2$ les constantes de la Proposition~\ref{caracts_norm}. On suppose~$c_1\geq c'_1$ et~$c_2 \leq c'_2$.
Il s'agit de majorer
\begin{equation}\label{somme_cars_norm}
\sum_{d|q}{\frac{1}{\vphi(q/d)} {\mathop{{\sum}'}_{\substack{\chi \mod{q/d}}}}{\chi(a) \tau(\overline{\chi}) \Psi_0(x/d, y ; \chi, \eta d)} } 
\end{equation}
où la somme $\sum'$ porte sur les caractères non principaux et non $Q$-exceptionnels.
Lorsqu'un tel caractère est lui-même de module~$q/d \leq Q$, la Proposition~\ref{caracts_norm} s'applique
avec $z=x/d$ et $\gamma=\eta d$, $T = T_1$ et~$\ee=b/\log QT$ et fournit
\begin{align*}
\Psi_0(x/d, y ; \chi, \eta d) \ll x^\alpha \zeta(\alpha, y) d^{-\alpha} (1+|\eta|x) \left( y^{-c_3} + \cL^{-c_3} + (\log x)^{1/2} x^{-c_3/\log Q} \right)
\end{align*}
pour une certaine constante~$c_3>0$. On sépare la somme sur~$d$ dans~\eqref{somme_cars_norm} selon si~$q/d\leq Q$ ou pas.
La contribution des~$d|q$ avec~$d\geq q/Q$ est certainement
\begin{align*}
&\ \ll \Psi(x, y) (\log x) (1+|\eta|x) \left( y^{-c_3} + \cL^{-c_3} + (\log x)^{1/2} x^{-c_3/\log Q} \right) \sum_{r|q,\ r\leq Q} \sqrt{r} (q/r)^{-\alpha} \\
&\ \ll \Psi(x, y) (1+|\eta|x) \left( y^{-c_2} + \cL^{-c_2} \right)
\end{align*}
en majorant trivialement la somme sur~$r$ par~$O\big(\min\{q, Q\}^{3/2}\big)$, quitte à réduire la valeur de~$c_2$ et augmenter la valeur de~$c_1$
afin d'avoir~$\alpha \geq 2/3$ et pour absorber le facteur~$\log x$.
La dernière inégalité fait usage de l'hypothèse~$Q \leq T_1^{c_2}$, quitte à supposer~$c_2<1$.

Il reste à majorer la contribution à l'expression~\eqref{somme_cars_norm} des~$d|q$ avec~$d<q/Q$, autrement dit des caractères
de modules~$r|q$ avec~$Q < r \leq y^{c_2}$. Soit~$\chi$ un tel caractère et~$c'_3$
la constante apparaissant en exposant dans la formule~\eqref{log_free_bound}. On pose~$c_3 = 2/c'_2$ et~$c_4 = 1/(10(c_3+1)c'_3)$.
Quitte à diminuer la valeur de~$c_2$, on a~$r^{c_3} \leq y^{c'_2}$.
Lorsque~$L(s, \chi)$ ne s'annule pas pour~$\sigma \geq 1-c_4$ et~$|\tau| \leq r^{c_3}$, on a pour~$(\log x)^{c_1}\leq y \leq x$ et~$d|q$
la majoration
\[ \Psi_0(x/d, y ; \chi, \eta d) \ll \Psi(x, y) (1+|\eta|x) ( x^{-c'_2 c_4/2} + (\log x) r^{-2}) .\]
La contribution de tous ces caractères à la somme~\ref{somme_cars_norm} est donc
\[ \ll \Psi(x, y) (1+|\eta|x) ( x^{-c_2} + (\log x)Q^{-1/2}) .\]
Pour tout~$r \geq 2$, notons~$N_r$ le nombre de caractères de module~$r$ tel que la fonction~$L(s, \chi)$ s'annule au moins une fois
pour~$\sigma \geq 1-c_4$ et~$|\tau| \leq r^{c_3}$. La majoration~\eqref{log_free_bound} fournit~$\sum_{r\leq R} N_r \ll R^{1/10}$.
Pour un tel caractère, on a~$\tau(\overline{\chi})/\vphi(r) \ll r^{-1/3}$. La majoration triviale~$|\Psi(x/d, y ; \chi, \eta d)|\leq \Psi(x, y)$
montre que la contribution de tous ces caractères à la somme~\eqref{somme_cars_norm} est
\[ \ll \Psi(x, y) \sum_{Q< r \leq y^{c_2}} \frac{N_r}{r^{1/3}} \ll \Psi(x, y) (Q^{-c_2/5} + y^{-c_2/5}) \]
ce qui fournit la conclusion souhaitée.

\end{proof}
\begin{remarque}
Il est possible de montrer que cette estimation est valable pour~$q\leq x^{c_2}$, \emph{cf.}~\cite[Lemma~2]{HarperBV2012}.
Cela n'a cependant pas d'utilité pour les applications que l'on envisage ici.
\end{remarque}

\subsection{Caractères principaux par la méthode du col}

On note pour tout $s=\sigma+i\tau$ avec~$\sigma>0$ et tout entier~$q$ qui est $y$-friable
\[ \zeta(s, q ; y) := \sum_{P(n)\leq y}{\frac{\mu(q/(n,q))}{\vphi(q/(n,q))} n^{-s}} =
\frac{q^{1-s}}{\vphi(q)} \prod_{p|q}{\left( 1-p^{s-1} \right)} \zeta(s, y) .\]

Pour $\sigma\leq 1$, le facteur devant~$\zeta(s, y)$ est~$\ll 2^{\omega(q)} q^{1-\sigma}/\vphi(q)$.
On montre une première estimation de~$V(x, y ; q, \eta)$ par la méthode du col.
Par rapport à celle de la Proposition~\ref{caracts_pcp_saias},
qui sera montrée dans la section suivante, elle a l'avantage d'être valide sous des conditions moins restrictives sur~$q$ et~$\eta$,
au détriment du terme d'erreur.

\begin{prop}\label{caracts_pcp}
Il existe des constantes~$c_1$ et~$c_2$ positives
telles que pour tout $(x, y)$ avec~$(\log x)^{c_1} \leq y \leq \exp\{(\log x)/(\log \log x)^4\}$,
tout entier~$q\in S(x^{1/4}, y)$ et tout~$\eta\in\bfR$ vérifiant~$|\eta|x \leq x^{1/4}$, on ait
\begin{equation}\label{estim_V_col}
\begin{aligned}
V(x, y ; q, \eta) = &\ \frac{\alpha q^{1-\alpha}}{\vphi(q)} \prod_{p|q}{\left( 1-p^{\alpha-1} \right)}\cPhi(\eta x, \alpha) \Psi(x, y) \\
&\ + O\left(\frac{2^{\omega(q)}q^{1-\alpha} \Psi(x, y)}{\vphi(q)(1+|\eta|x)^\alpha}\frac{(\log q)^2\log(2+|\eta|x)^3}{u} \right) \\
&\ + O\left(\frac{2^{\omega(q)}q^{1-\alpha} \Psi(x, y)}{\vphi(q)} (1+|\eta|x)\left(y^{-c_2} + \e^{-c_2(\log x)^{3/5}(\log \log x)^{-1/5}} \right) \right)
.\end{aligned}
\end{equation}
\end{prop}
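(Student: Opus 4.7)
La strat�gie envisag�e est la m�thode du col � la Hildebrand--Tenenbaum~\cite{TeneHild86}, adapt�e au pr�sent cadre perturb�. La s�rie de Dirichlet associ�e � la suite $a_n := \mu(q/(q,n))/\vphi(q/(q,n))$ s'�crit
\begin{equation*}
\zeta(s, q; y) = \frac{q^{1-s}}{\vphi(q)} \prod_{p|q}(1-p^{s-1}) \, \zeta(s, y),
\end{equation*}
ce qui isolera un facteur \og lent \fg\ (en~$s$) et le facteur~$\zeta(s,y)$ responsable du ph�nom�ne de col en~$s=\alpha$. On v�rifiera les hypoth�ses du Lemme~\ref{perron_phi} avec~$\kappa = \alpha$ gr�ce au Lemme~\ref{hypoth_perron_a} appliqu� avec~$q_1 = 1$, ce qui fournit~$M = 2^{\omega(q)} q^{1-\alpha}/\vphi(q)$ et~$c = \alpha/2$. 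Pour un choix~$T = y^{c_3}$ avec~$c_3>0$ suffisamment petite, la formule de Perron et la majoration~\eqref{x_zeta_psi} donneront
\begin{equation*}
V(x, y ; q, \eta) = \frac{1}{2i\pi}\int_{\alpha-iT}^{\alpha+iT} \zeta(s, q ; y)\, x^s\, \cPhi(\eta x, s)\, \dd s + O\bigg(\frac{2^{\omega(q)} q^{1-\alpha} \Psi(x, y) (1+|\eta x|)}{\vphi(q)\, y^{c_3 \alpha/4}}\bigg),
\end{equation*}
ce terme d'erreur �tant absorb� par celui annonc�.

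\textbf{Voisinage du col.} On scindera ensuite l'int�grale en une partie centrale~$|\tau| \leq \tau_0$ (avec~$\tau_0$ l�g�rement sup�rieur �~$\sigma_2^{-1/2}$) et sa queue. Dans la partie centrale, le d�veloppement au col
\begin{equation*}
\zeta(\alpha+i\tau, y)\, x^{\alpha+i\tau} = x^\alpha \zeta(\alpha, y) \exp\{-\sigma_2 \tau^2/2 + O((\log x)(\log y)^2|\tau|^3)\},
\end{equation*}
dont la preuve figure dans~\cite[Lemma~11]{TeneHild86}, se combinera au remplacement des facteurs~$\frac{q^{1-s}}{\vphi(q)}\prod_{p|q}(1-p^{s-1})$ et~$\cPhi(\eta x, s)$ par leurs valeurs en~$s=\alpha$. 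Les d�riv�es logarithmiques de ces facteurs �tant respectivement $O(\log q)$ et $O(\log(2+|\eta x|))$, et les termes impairs en~$\tau$ s'annulant par sym�trie, une int�gration gaussienne jointe � l'estimation~\eqref{estim_psi_alpha} fera appara�tre le terme principal
\begin{equation*}
\frac{\alpha q^{1-\alpha}}{\vphi(q)} \prod_{p|q}(1-p^{\alpha-1})\, \cPhi(\eta x, \alpha)\, \Psi(x, y).
\end{equation*}
Les termes d'ordre sup�rieur du Taylor, combin�s aux estimations~\eqref{estim_alpha}--\eqref{estim_sigma2} et au Lemme~\ref{majo_phi} pour contr�ler le facteur~$\cPhi$, conduiront au terme d'erreur $\frac{2^{\omega(q)} q^{1-\alpha} \Psi(x, y)}{\vphi(q)(1+|\eta x|)^\alpha} \cdot \frac{(\log q)^2 (\log(2+|\eta x|))^3}{u}$.

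\textbf{Contr�le des queues.} Pour~$\tau_0 < |\tau| \leq T$, on distinguera deux r�gimes. Dans le r�gime interm�diaire $\tau_0 < |\tau| \leq y^{1-\alpha}$, on utilisera les bornes locales de Hildebrand--Tenenbaum~\cite[Lemmas~8--10]{TeneHild86} sur~$|\zeta(\alpha+i\tau, y)/\zeta(\alpha, y)|$, qui fournissent une d�croissance suffisante au-del� du voisinage gaussien du col. Dans le r�gime $y^{1-\alpha} \leq |\tau| \leq T$, on d�placera le contour vers la gauche � $\sigma = \alpha - c_4 (\log T)^{-2/3}(\log\log T)^{-1/3}$, en exploitant la r�gion sans z�ro de Vinogradov--Korobov pour~$\zeta$ via le second �nonc� du Lemme~\ref{majo_l_decal} ; la majoration $|\zeta(\sigma+i\tau, y)/\zeta(\alpha+i\tau, y)| \ll x^{(\alpha-\sigma)/2}$ ainsi fournie permettra le gain~$x^{-c_4(\log T)^{-2/3}(\log \log T)^{-1/3}/2}$. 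Apr�s combinaison avec le Lemme~\ref{majo_phi}, la contribution totale de la queue sera absorb�e par le terme annonc� $\frac{2^{\omega(q)} q^{1-\alpha} \Psi(x, y)}{\vphi(q)}(1+|\eta x|)\, \e^{-c_2 (\log x)^{3/5}(\log \log x)^{-1/5}}$.

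\textbf{Difficult� principale.} L'articulation de ces trois r�gions constitue le point d�licat : le choix de~$\tau_0$ devra simultan�ment garantir que le reste cubique de la Taylorisation soit n�gligeable devant la phase gaussienne, et que la d�croissance effective dans le r�gime interm�diaire commence suffisamment t�t. En outre, les contraintes du Lemme~\ref{majo_l_decal} sur les param�tres~$T$ et~$\ee$ sont � l'origine de la borne sup�rieure en~$y$ du domaine admissible, expliquant la restriction $y \leq \exp\{(\log x)/(\log \log x)^4\}$. Enfin, il faudra contr�ler soigneusement le facteur~$\cPhi(\eta x, s)$, dont le comportement change selon que $|\eta x|$ est petit ou grand (voir Lemme~\ref{majo_phi}), pour que les termes d'erreur restent valides uniform�ment en ce param�tre.
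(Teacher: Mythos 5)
Your outline follows essentially the same route as the paper's proof: Perron's formula on the line $\sigma=\alpha$ via Lemmes~\ref{hypoth_perron_a} (with $q_1=1$) and~\ref{perron_phi}, a Taylor expansion of the slow factor $\frac{s q^{1-s}}{\vphi(q)}\prod_{p\mid q}(1-p^{s-1})\cPhi(\eta x,s)$ in a central range of width about $u^{-1/3}(\log y)^{-1}$ combined with the saddle-point analysis of \cite[Lemma~11]{TeneHild86} to extract the main term and the error in $(\log q)^2(\log(2+|\eta|x))^3/u$, the bounds of \cite[Lemma~8]{TeneHild86} up to $|\tau|\leq y^{1-\alpha}$, and a leftward shift of width $\asymp(\log T)^{-2/3}(\log\log T)^{-1/3}$ controlled by Lemme~\ref{majo_l_decal} beyond that height. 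The one genuine gap is your choice $T=y^{c_3}$. The saving produced by the shift is $x^{-c(\log T)^{-2/3}(\log\log T)^{-1/3}}=\exp\{-c'(\log x)(\log y)^{-2/3}(\log\log y)^{-1/3}\}$, and the admissible range allows $\log y$ up to $(\log x)/(\log\log x)^{4}$, in which case this is only $\exp\{-c'(\log x)^{1/3}(\log\log x)^{7/3}\}$; more generally, as soon as $\log y$ is of larger order than $(\log x)^{3/5}$, this quantity exceeds both announced error terms $y^{-c_2}$ and $\e^{-c_2(\log x)^{3/5}(\log\log x)^{-1/5}}$, so the absorption you claim fails and the stated estimate is not obtained for large $y$. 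The paper avoids this precisely by taking $T=\min\{y^{c_2},\e^{(\log x)^{3/5}(\log\log x)^{-1/5}}\}$: capping $T$ keeps the shift width large enough that $x^{-c\ee}$ is dominated by the second error term, while the Perron truncation error, of size $\ll T^{-\alpha/2}\log x\log T$ times the usual factor, then produces exactly the two stated terms. This is a tuning error rather than a wrong approach, but it must be repaired.

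A secondary caution: the ratio bound of Lemme~\ref{majo_l_decal} for $\zeta(\cdot,y)$ is proved by controlling the term $(y^{1-\kappa-i\tau}-1)/(1-\kappa-i\tau)$, which requires $|\tau|\geq y^{1-\sigma}$ at the point considered; this is why the paper joins the abscissa $\alpha$ to the shifted abscissa along the curves $\tau=\pm y^{1-\sigma}$ rather than by horizontal connectors at height $y^{1-\alpha}$, and on these curved pieces it also uses $\zeta(\sigma+iy^{1-\sigma},y)\ll\zeta(\alpha,y)H(u)^{-\delta}$ together with $(\log x)(\log y)y^{2(1-\alpha)}\ll H(u)^{\delta/2}$, which is exactly where the hypothesis $y\leq\exp\{(\log x)/(\log\log x)^4\}$ enters, as you anticipated. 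With the corrected choice of $T$ and this contour, your plan matches the paper's argument.
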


\begin{remarque}
En particulier, lorsque~$|\eta|x \leq \min\{y^{c_2/2}, \e^{c_2(\log x)^{3/5}(\log \log x)^{-1/5}/2} \}$, on a
\begin{equation}\label{majo_V_tv}
V(x,y;q,\eta) \ll \frac{2^{\omega(q)}q^{1-\alpha}\Psi(x,y)(\log q)^2 (\log(2+|\eta|x))^3}{\vphi(q)(1+|\eta|x)^\alpha}
.\end{equation}
\end{remarque}

\begin{proof}[Démonstration de la Proposition~\ref{caracts_pcp}]
Soit~$T$ un réel supérieur à~$4$. La série de Dirichlet~$\zeta(s, q ; y)$ est absolument convergente pour~$\sigma>0$, donc
les Lemmes~\ref{hypoth_perron_a} (avec~$q_1=1$) et~\ref{perron_phi} s'appliquent et fournissent
\begin{equation}\label{perron_v}
\begin{aligned}
V(x, y ; q, \eta) = &\ \frac{1}{2 i \pi} \int_{\alpha-iT}^{\alpha+iT}  \zeta(s, q ; y) x^s \cPhi(\eta x, s) \dd s \\
& + O\left(\frac{2^{\omega(q)}q^{1-\alpha} \Psi(x, y)}{\vphi(q)} \frac{(1+|\eta|x)\log x \log T}{T^{\alpha/2}} \right)
.\end{aligned}
\end{equation}
On note~$\ee := c_3(\log T)^{-2/3}(\log \log T)^{-1/3}$ pour un certain réel~$c_3>0$ fixé
plus petit que la constante~$c_2$ du Lemme~\ref{majo_l_decal}, et on choisit~$T = \min\{y^{c_2}, \e^{(\log x)^{3/5}(\log \log x)^{-1/5})}\}$.
Alors les hypothèses du Lemme~\ref{majo_l_decal} sont satisfaites quitte à choisir~$c_1$ assez grande et~$c_2$ assez petite.
D'autre part, quitte à augmenter la valeur de~$c_1$ et diminuer celle de~$c_2$, on suppose que~$\alpha - c_2\ee \geq 1/2$. 
On intègre suivant le chemin $\cup_{i=1}^{7}\cC_j$, où

\begin{enumerate}
\item $\cC_1$ est le segment $[\alpha-iT, \alpha-\ee -iT]$,
\item $\cC_2$ est le segment $[\alpha-\ee-iT, \alpha-\ee- i y^{1-\alpha+\ee}]$,
\item $\cC_3$ est le chemin reliant le point $\alpha-\ee -i y^{1-\alpha+\ee}$
au point $\alpha - i y^{1-\alpha}$ en suivant la courbe~$\tau = y^{1-\sigma}$,
\item $\cC_4$ est le segment $[\alpha - i y^{1-\alpha}, \alpha + i y^{1-\alpha}]$,
\item $\cC_5$ est le chemin reliant le point $\alpha + i y^{1-\alpha}$
au point $\alpha-\ee +i y^{1-\alpha+\ee}$ en suivant la courbe~$\tau = -y^{1-\sigma}$,
\item $\cC_6$ est le segment $[\alpha-\ee +i y^{1-\alpha+\ee}, \alpha-\ee +iT]$,
\item $\cC_7$ est le segment $[\alpha-\ee+iT, \alpha+iT]$
. \end{enumerate}

\begin{center}
\begin{picture}(80,80)
\linethickness{0.2mm}
\put(0, 40){ \vector(1, 0){80} }
\put(18, 0){ \vector(0, 1){80} }
\put(40, 39){ \line(0, 1){2} }
\put(17, 10){ \line(1, 0){2} }
\put(17, 20){ \line(1, 0){2} }
\put(17, 30){ \line(1, 0){2} }
\put(17, 50){ \line(1, 0){2} }
\put(17, 60){ \line(1, 0){2} }
\put(17, 70){ \line(1, 0){2} }

\put(60,  10){ \line(-1, 0){20} }
\put(40,  10){ \line(0, 1){10} }
\qbezier(41, 20)(52, 28)(61, 30)
\put(60, 30){ \line(0, 1){20} }
\qbezier(61, 50)(52, 52)(41, 60)
\put(40, 60){ \line(0, 1){10} }
\put(40, 70){ \line(1, 0){20} }

\put(60, 10){ \vector(-1, 0){ 10} }
\put(40, 10){ \vector(0, 1){ 5} }
\put(60, 30){ \vector(0, 1){ 5} }
\put(40, 60){ \vector(0, 1){ 5} }
\put(40, 70){ \vector(1, 0){ 10} }

\put( 10, 68){ $T$ }
\put( 0, 58){ $y^{1-\alpha+\ee}$ }
\put( 8, 48){ $y^{1-\alpha}$ }
\put( 5, 28){ $-y^{1-\alpha}$ }
\put( -3,  18){ $-y^{1-\alpha+\ee}$ }
\put( 7,  8){ $-T$ }
\put(19, 41){ $0$ }
\put(34, 42){ $\alpha-\ee$ }
\put(61, 41){ $\alpha$ }

\put(48, 5){ $\cC_1$ }
\put(42, 14){ $\cC_2$ }
\put(48, 29){ $\cC_3$ }
\put(62, 35){ $\cC_4$ }
\put(48, 56){ $\cC_5$ }
\put(42, 64){ $\cC_6$ }
\put(48, 73){ $\cC_7$ }

\end{picture}
\end{center}
Pour $j\in\{1, \cdots, 7\}$ on note $I_j$ la contribution du chemin $\cC_j$ :
\[ I_j = I_j(\eta) := \frac{1}{2i\pi} \int_{\cC_j} \zeta(s, q ; y) x^s \cPhi(\eta x, s) \dd s .\]

La contribution du segment $\cC_7$ est, grâce aux Lemmes~\ref{majo_l_decal} et~\ref{majo_phi},
\[ I_7 \ll \int_{\alpha-\ee}^{\alpha} \frac{2^{\omega(q)}q^{1-\sigma}}{\vphi(q)} \zeta(\alpha, y) x^{\sigma} \frac{1+|\eta|x}{T} \dd \sigma
\ll \frac{2^{\omega(q)}q^{1-\alpha}\Psi(x, y)}{\vphi(q)} \frac{(1+|\eta|x)\log x}{T} .\]

Sur le segment $\cC_6$ on a $|\tau| \geq y^{1-\sigma}$. Le Lemme~\ref{majo_l_decal} est donc encore applicable et on a
\begin{align*}
I_6 &\ \ll \frac{2^{\omega(q)}q^{1-\alpha+\ee}}{\vphi(q)} x^{\alpha-\ee/2} \zeta(\alpha, y) (1+|\eta|x) \log T 
\ll \frac{2^{\omega(q)}q^{1-\alpha} \Psi(x, y)}{\vphi(q)}\frac{(1+|\eta|x)\log T \log x}{x^{\ee/4}}
.\end{align*}

Sur le segment $\cC_5$, le traitement est analogue. On a
\begin{align*}
 I_5 &\ \ll \frac{2^{\omega(q)}}{\vphi(q)}
\int_{\alpha-\ee}^{\alpha}q^{1-\sigma} |\zeta(\sigma+iy^{1-\sigma}, y) \cPhi(\eta x, \sigma+iy^{1-\sigma})|
(\log y) x^{\sigma} y^{1-\sigma} \dd \sigma
.\end{align*}
Pour tout~$\kappa \in [0, \ee]$, on a pour un certain~$\delta>0$
\[ \zeta(\alpha-\kappa+iy^{1-\alpha+\kappa}, y) \ll \zeta(\alpha, y) H(u)^{-\delta} \]
\[ \cPhi(\eta x, \alpha-\kappa+iy^{1-\alpha+\kappa}) \ll y^{1-\alpha+\kappa}\log(2+|\eta|x)/(1+|\eta|x)^{\alpha-\kappa} \]
où la première inégalité est conséquence du Lemme~\ref{majo_l_decal} et de~\cite[lemma~8]{TeneHild86}. Ainsi on obtient
\begin{align*}
I_5 &\ \ll \frac{2^{\omega(q)}q^{1-\alpha} x^\alpha \zeta(\alpha, y)  (\log x) (\log y) y^{2(1-\alpha)}}{\vphi(q) (1+|\eta|x)^\alpha H(u)^\delta}
\int_0^{\ee} \left(\frac{y^2q(1+|\eta|x)}{x}\right)^\kappa \dd \kappa \\
&\ \ll \frac{2^{\omega(q)}q^{1-\alpha} \Psi(x, y)}{(1+|\eta| x)^\alpha \vphi(q) H(u)^{\delta/2}}
\end{align*}
où l'on a utilisé l'inégalité~$(\log x)(\log y) y^{2(1-\alpha)} \ll H(u)^{\delta/2}$ qui découle de nos hypothèses sur~$(x, y)$.

Sur le segment $\cC_4$, le traitement est identique à celui des segments $\cC_3$, $\cC_4$ et $\cC_5$
dans la preuve de la proposition~3.5 de~\cite{D2012}. On reprend ici les étapes principales. La contribution des~$s$
vérifiant~$1/\log y \leq |\tau| \leq y^{1-\alpha}$ est
\[ \ll \frac{2^{\omega(q)}q^{1-\alpha}x^\alpha}{\vphi(q)} \int_{1/\log y}^{y^{1-\alpha}} |\zeta(\alpha + i\tau) \cPhi(\eta x, \alpha+i\tau)| \dd \tau
\ll \frac{2^{\omega(q)} q^{1-\alpha} \Psi(x, y)}{\vphi(q) (1+|\eta|x)^\alpha H(u)^{\delta/2}} \]
où l'on a utilisé les Lemmes~\ref{majo_phi} et~\cite[lemma~8]{TeneHild86}.
On pose~$T_0 := u^{-1/3}(\log y)^{-1}$. La contribution à~$I_4$ des~$s$ vérifiant~$T_0 \leq |\tau| \leq 1/\log y$ est
\[ \ll \frac{2^{\omega(q)} q^{1-\alpha} \log(2+|\eta|x)}{\vphi(q) (1+|\eta|x)^\alpha}
\int_{T_0}^{1/\log y}|\zeta(\alpha+i\tau)|x^\alpha \dd \tau
\ll \frac{2^{\omega(q)}q^{1-\alpha} \Psi(x, y) \log(2+|\eta|x)}{\vphi(q) (1+|\eta|x)^\alpha u}\]
où l'on a utilisé les calculs de la démontration du Lemma~11 de~\cite{TeneHild86} pour évaluer l'intégrale.
Lorsque~$|\tau| \leq T_0$, et quitte à changer la valeur de~$c_1$ afin d'avoir~$\alpha \geq 1/2$, on a pour~$s = \alpha+i\tau$ l'estimation
\[ \int_0^1 \e(\lambda t) (\log t)^k t^{s-1} \dd t \ll_k \frac{\log(2+|\lambda|)^{k+1}}{(1+|\lambda|)^\alpha} \qquad (k \in \{0,1,2\}.)\]
Cela se montre par en intégrant par partie de façon similaire aux calculs du Lemme~\ref{majo_phi}. Ainsi,
\begin{align*}
\frac{s q^{1-s}}{\vphi(q)} \prod_{p|q}{\left( 1-p^{s-1} \right)}\cPhi(\eta x, s)
= &\ \frac{\alpha q^{1-\alpha}}{\vphi(q)} \prod_{p|q}{\left( 1-p^{\alpha-1} \right)}\cPhi(\eta x, \alpha) \\
&\ + \lambda\tau + O\left(\frac{2^{\omega(q)}q^{1-\alpha} (\log q)^2 (\log(2+|\eta|x))^3}{\vphi(q) (1+|\eta|x)^\alpha} \tau^2\right)
\end{align*}
où le coefficient~$\lambda$ dépend au plus de~$x, y, q$ et~$\eta$ et vérifie
\[\lambda \ll \frac{2^{\omega(q)} q^{1-\alpha} \log q (\log(2+|\eta|x))^2}{\vphi(q) (1+|\eta|x)^\alpha}.\]
Le reste des calculs sont identiques à ceux de~\cite[proposition~3.5]{D2012} : en reportant ce développement dans l'intégrale
puis en développant le terme complémentaire~$x^s \zeta(s, y) / s$ de la même façon que dans~\cite[lemma~11]{TeneHild86}, on obtient finalement
\begin{align*}
I_4 = \frac{\alpha q^{1-\alpha}}{\vphi(q)} \prod_{p|q}{\left( 1-p^{\alpha-1} \right)}\cPhi(\eta x, \alpha) \Psi(x, y)
+ O\left(\frac{2^{\omega(q)} q^{1-\alpha} \Psi(x, y)}{\vphi(q) (1+|\eta|x)^\alpha} \frac{(\log q)^2 (\log(2+|\eta|x))^3}{u}\right)
. \end{align*}

Pour $j\in\{1, 2, 3\}$ on a $I_j(\eta) = \overline{I_{8-j}(-\eta)}$ et on se ramène aux calculs précédents.
L'estimation voulue suit en regroupant toutes les contributions puisque l'on a toujours~$\ee \log x \gg \log T$.

\end{proof}

\subsection{Caractères principaux par la transformée de Laplace}

Une autre façon d'évaluer~$V(x, y ; q, \eta)$ consiste à utiliser une estimation de De Bruijn~\cite{deBruijn1951}
précisée par Saias~\cite{Saias1989} et utilisée dans~\cite{AGRB2010}. On rappelle la définition~\eqref{def_Ye}.

\begin{prop}\label{caracts_pcp_saias}
Pour tout~$\ee>0$ fixé, lorsque~$(x, y)\in (H_\ee)$ avec $y\leq \sqrt{x}$, et~$q \in \bfN$ et~$\eta\in\bfR$
avec~$q\leq \cY_{\ee}$ et~$|\eta| \leq \cY_{\ee}/x$, on a
\[ V(x, y ; q, \eta) = \Vt(x, y ; q, \eta) + O_\ee\left(\frac{2^{\omega(q)}\Psi(x, y)}{\vphi(q) \cY_{\ee}} \right) .\]
\end{prop}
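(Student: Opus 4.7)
Le plan consiste à décomposer $V(x, y ; q, \eta)$ par inversion de Möbius afin de le mettre en bijection avec la définition~\eqref{def_Vt} de $\Vt$, puis à appliquer l'estimation~\eqref{estim_Psi_saias} de Saias terme par terme. L'identité clé, que l'on vérifie par multiplicativité en la contrôlant aux puissances de premiers, est
\[ \frac{\mu(q/(q,n))}{\vphi(q/(q,n))} = \frac{1}{\vphi(q)} \sum_{\substack{k \mid q \\ k \mid n}} \mu(q/k) \, k. \]
En la reportant dans la définition de $V$, il vient
\[ V(x, y ; q, \eta) = \sum_{k \mid q} \frac{\mu(q/k) k}{\vphi(q)} \sum_{\substack{n \in S(x, y) \\ k \mid n}} \e(n\eta), \]
expression de même structure que~\eqref{def_Vt}. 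La preuve se ramène donc à établir, pour chaque $k \mid q$ avec $P(k) \leq y$ (les autres termes étant nuls), que les sommes $A_k := \sum_{m \in S(x/k, y)} \e(km\eta)$ et $B_k := \sum_{m \leq x/k} \e(km\eta) \lambda(m, y)$ diffèrent d'une quantité contrôlée.

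J'exprimerais $A_k$ comme l'intégrale de Riemann-Stieltjes $\int_{0^-}^{x/k} \e(k t \eta) \, d\Psi(t, y)$. L'hypothèse $y \leq \sqrt{x}$ jointe à $k \leq q \leq \cY_\ee$ garantit que le couple $(x/k, y)$ appartient au domaine~$(H_\ee)$, si bien que~\eqref{estim_Psi_saias} permet de remplacer $\Psi$ par $\Lambda$. La relation~\eqref{Lambda_lambda}, dont la composante atomique $-t \, d(\{t\}/t)$ porte des masses $+1$ à chaque entier, permet alors de relier l'intégrale contre $d\Lambda$ obtenue à la somme discrète $B_k$, modulo des termes élémentaires explicites issus des parties continues.

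Le principal obstacle est le contrôle du facteur oscillant $\e(kt\eta)$ : la condition $|\eta| \leq \cY_\ee/x$ autorise $|\eta|x$ à atteindre la taille critique $\cY_\ee$, si bien qu'une sommation d'Abel naïve après application de Saias engendrerait un facteur parasite $1+|\eta|x$ annulant complètement le gain espéré. Pour contourner cette difficulté, il convient de travailler au niveau de la formule de Perron du Lemme~\ref{perron_phi} : $A_k$ et $B_k$ s'expriment tous deux par une intégrale sur une verticale impliquant $\cPhi(\eta x, s)$, dont la décroissance en $\tau$ fournie par le Lemme~\ref{majo_phi} est suffisante pour que l'écart entre $\zeta(s, y)$ et la transformée de Mellin associée à $\Lambda$, intégré sur un contour bien choisi, produise précisément le terme d'erreur visé en $\Psi(x/k, y)/\cY_\ee$. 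La sommation sur $k \mid q$ au moyen de la majoration locale $\Psi(x/k, y) \ll \Psi(x, y)/k^\alpha$ du Lemme~\ref{estim_psi_local} fournit enfin l'erreur globale annoncée en $2^{\omega(q)} \Psi(x, y)/(\vphi(q) \cY_\ee)$.
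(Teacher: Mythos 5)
Your opening is on the right track and matches the paper: the H\"older identity for the Ramanujan sum gives exactly the decomposition $V(x,y;q,\eta)=\sum_{k|q}\frac{\mu(q/k)k}{\vphi(q)}\sum_{m\in S(x/k,y)}\e(mk\eta)$, and the paper then also writes each inner sum as a Stieltjes integral, applies Saias's estimate~\eqref{estim_Psi_saias}, and splits $\dd\Lambda$ via~\eqref{Lambda_lambda}. But the ``principal obstacle'' you identify is not one, and the detour you build around it is where your argument has a real gap. Since $|\eta|x\leq \cY_\ee$ and the hypothesis $(x,y)\in(H_\ee)$ implies $(x,y)\in(H_{\ee'})$ for every $\ee'<\ee$, one may apply~\eqref{estim_Psi_saias} with parameter $\ee/2$: the ``naive'' integration by parts then costs $(1+|\eta|x)\Psi(x/k,y)/\cY_{\ee/2}\ll_\ee \Psi(x/k,y)/\cY_\ee$, because $\cY_{\ee/2}$ dominates any fixed power of $\cY_\ee$. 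This is precisely what the paper does (it also truncates the Stieltjes integral at $Q=x/\cY_\ee$, a point you skip, since $(t/k,y)$ leaves $(H_\ee)$ for small $t$). Your replacement strategy --- redoing everything through the Perron formula and comparing $\zeta(s,y)$ with ``the Mellin transform associated with $\Lambda$'' on a well-chosen contour --- is not carried out: $B_k$ is a sum over \emph{all} integers weighted by $\lambda(\cdot,y)$, so Lemme~\ref{perron_phi} does not apply to it directly, and the assertion that the contour integral ``produces precisely'' the error $\Psi(x/k,y)/\cY_\ee$ is a restatement of the goal, amounting to re-proving the de Bruijn--Saias theorem with the extra factor $\cPhi(\eta x,s)$.

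There is a second, more structural misunderstanding: you claim the atomic part $-t\,\dd(\{t\}/t)$ of~\eqref{Lambda_lambda} is what connects the integral against $\dd\Lambda$ to the discrete sum $B_k=\sum_{m\leq x/k}\e(km\eta)\lambda(m,y)$. In fact that atomic part only contributes an admissible error ($\ll 2^{\omega(q)}q\cY_\ee/\vphi(q)$ after integration by parts); the discrete sum arises from the \emph{continuous} part $\int \e(kt\eta)\lambda(t,y)\,\dd t$, via the identity $\int_0^t\e(v\eta)\,\dd v=E(t,t;\eta)+O(1)$, an integration by parts against $\lambda'(t,y)$, and the derivative bound~\eqref{majo_lambda'} (this is the computation~\eqref{Vt_ipp} in the paper, with residual terms $O(yu)$ absorbed using $yu\cY_0\ll\Psi(x,y)$). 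None of this appears in your sketch, so even granting your contour-integration claim, the passage from the continuous density $\lambda$ to the sum over integers defining $\Vt$ is missing.
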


\begin{proof}

Cette proposition généralise des calculs faits dans~\cite[théorème~4.2]{AGRB2010}.
La différence vient du fait que l'on calcule uniquement la contribution des caractères principaux,
pour lesquels on dispose de la région sans zéro de~$\zeta$ de Vinogradov-Korobov,
plus étendue que la région de Siegel-Walfisz pour les fonctions~$L$. Notons~$Q := x/\cY_{\ee}$.
Les mêmes calculs que~\cite[lemme 3.2]{AGRB2010} montrent que
\begin{align*}
V(x, y ; q, \eta) &\ = \sum_{k|q} \frac{\mu(q/k) k}{\vphi(q)} \sum_{m \in S(x/k, y)} \e(mk\eta) \\
&\ = \sum_{k|q} \frac{\mu(q/k)k}{\vphi(q)} \int_{Q}^{x} \e(t\eta) \dd\{\Psi(t/k, y)\}
+ O_\ee\left(\frac{2^{\omega(q)}\Psi(x, y)}{\vphi(q)\cY_{2\ee}}\right)
\end{align*}
où l'on a utilisé l'estimation~\eqref{estim_psi_local_crude} et le fait que~$\cY_{\ee}^{2\alpha-1} \gg_\ee \cY_{2\ee}$ sous notre hypothèse sur~$(x, y)$.
L'estimation~\eqref{estim_Psi_saias} fournit, en intégrant par parties,
\begin{align*}
\int_{Q}^x \e(t\eta) & \dd\{\Psi(t/k, y)\} - \int_{Q}^{x} \e(t\eta) \dd\{\Lambda(t/k, y)\} \\ &\
= \int_{Q}^x \e(t\eta)\dd\{O(\Psi(t/k, y)\cY_{\ee/2}^{-1}\} \\
&\ \ll
(1+|\eta|x)\frac{\Psi(x/k, y)}{\cY_{\ee/2}} \ll_\ee \frac{\Psi(x/k,y)}{\cY_{\ee}}
.\end{align*}
Notant~$V_q(x, y) := \sum_{k|q} \mu(q/k) k/ \vphi(q) \Lambda(x/k, y)$, on en déduit
\[ V(x, y ; q, \eta) = \int_{Q}^x \e(t \eta) \dd V_q(t, y) +
O_\ee\left(\frac{2^{\omega(q)}\Psi(x, y)}{\vphi(q) \cY_{2\ee}}\right) .\]
En utilisant~\eqref{Lambda_lambda}, on réécrit cela sous la forme
\[ V(x, y ; q, \eta) = \sum_{k|q} \frac{\mu(q/k)k}{\vphi(q)} \left\{ \int_{Q/k}^{x/k} \e(kt\eta) \lambda(t, y) \dd t +
\int_{Q/k}^{x/k} \e(k t \eta) t \dd\Big(\frac{\{t\}}{t}\Big) \right\}+ O\left(\frac{2^{\omega(q)}\Psi(x, y)}{\vphi(q) \cY_{2\ee}} \right) .\]
En intégrant par parties, on a d'une part
\[ \sum_{k|q} \frac{\mu(q/k)k}{\vphi(q)} \int_{Q/k}^{x/k} \e(k t \eta) t \dd\Big(\frac{\{t\}}{t}\Big)
\ll \frac{2^{\omega(q)} q \cY_\ee}{\vphi(q)} \ll \frac{2^{\omega(q)}\Psi(x, y)}{\vphi(q) \cY_\ee}, \]
et d'autre part, en utilisant~$\int_0^t \e(v\eta) \dd v = E(t, t ; \eta) + O(1)$,
\begin{align*}
\int_{Q/k}^{x/k} \e(k t \eta) \lambda(t, y) \dd t =  E\Big(\frac{x}{k}, \frac{x}{k} ; k\eta\Big) \lambda\Big(\frac{x}{k}, y\Big)
- \int_{Q/k}^{x/k} E(t, t ; k\eta) \lambda'(t, y) \dd t + O\left( y u + \Psi(Q/k, y) \right).
\end{align*}
Les hypothèses faites sur~$x$ et~$y$ assurent que~$y u \cY_0 \ll \Psi(x, y)$ et~$\cY_\ee^{2\alpha-1} \gg \cY_{2\ee}$, le terme d'erreur
est donc~$O(\Psi(x, y) / (k \cY_{2\ee}))$. On a de plus la majoration
\begin{equation}\label{majo_lambda'}
\lambda'(t, y) \ll \frac{\Psi(t, y) \log(u+1) }{t^2 \log y}  \qquad (y \leq t \leq x)
.\end{equation}
qui se déduit par différentiation de~\cite[formule~(2.2)]{AGRB2010} en utilisant par exemple l'estimation~\cite[formule~(30)]{rdlb99}.
Cela implique
\[ \int_{1}^{Q/k} E(t, t ; k\eta) \lambda'(t, y) \dd t \ll \Psi(x/\cY_{\ee/4}, y) \log x + \frac{\Psi(x/(k\cY_\ee), y)}{x/\cY_{\ee/4}} \]
en séparant l'intégrale en~$x/\cY_{\ee/4}$. Les hypothèses sur~$x$ et~$y$ impliquent alors
que chacun de ces termes est~$\ll \Psi(x, y)/(k \cY_{2\ee})$. On a enfin
\begin{equation}\label{Vt_ipp}
\begin{aligned}
&\ E(x/k, x/k ; k\eta) \lambda(x/k, y) - \int_{1}^{x/k} E(t, t ; k\eta) \lambda'(t, y) \dd t \\
= &\ \sum_{n\leq x/k} \e(kn\eta) \lambda(n, y) + \frac{y}{\log y} \sum_{y < n \leq x/k} \left( \frac{\{x/(ky)\}}{x/k} - \frac{\{n/y\}}{n} \right) \\
= &\ \sum_{n\leq x/k} \e(kn\eta) \lambda(n, y) + O(y u)
.\end{aligned}
\end{equation}
De même que précédemment, le terme d'erreur est~$O(\Psi(x, y)/(k\cY_{2\ee}))$. On obtient donc
\begin{align*}
V(x,y ; q, \eta) &\ = \sum_{k|q} \frac{\mu(q/k)k}{\vphi(q)} \sum_{n\leq x/k} \e(kn\eta) \lambda(n, y)
+ O\left(\frac{2^{\omega(q)}\Psi(x, y)}{\vphi(q) \cY_{2\ee}} \right) \\
&\ = \Vt(x, y ; q, \eta) + O\left(\frac{2^{\omega(q)}\Psi(x, y)}{\vphi(q) \cY_{2\ee}} \right) \\
\end{align*}
qui est l'estimation voulue, $\ee$ pouvant être pris arbitrairement petit.

\end{proof}

\subsection{Caractères exceptionnels}

Soit~$Q$ un entier supérieur à~$2$. On se place dans le cas de l'existence du zéro de Siegel.
Soit~$q\leq Q$ avec $\DSZ(q) = 1$ et~$P(q)\leq y$. On définit
\begin{align*}
LW(s, q ; y) := &\ \frac{\tau(\chi_1)}{\vphi(q_1)} \sum_{r|(q/q_1)}{ \frac{\mu(r)\chi_1(r)}{\vphi(r)}
\left(\frac{q_1 r}{q}\right)^s L(s, \chi_r ; y) } \\
= &\ \left(\frac{q}{q_1}\right)^{1-s} \frac{\tau(\chi_1)}{\vphi(q)}
\prod_{\substack{p | q/q_1 \\ p \nmid q_1}}{\left(1-\chi_1(p) p^{s-1}\right)}  L(s, \chi_1 ; y)
.\end{align*}
Pour~$\sigma \leq 1$, le facteur devant~$L(s, \chi_1 ; y)$ est $\ll (q/q_1)^{1-\sigma}2^{\omega(q/q_1)}\sqrt{q_1}/\vphi(q)$.

\begin{prop}\label{caracts_sieg}
Il existe des constantes~$c_1$ et~$c_2$ positives telles que pour tout~$(x, y)$
avec~$(\log x)^{c_1} \leq y \leq \exp\{(\log x)/(\log\log x)^4\}$,
tout~$Q \leq y^{c_2/\log \log \log x}$, tout caractère~$\chi$ de module~$q\leq Q$, qui est~$Q$-exceptionnel, et tout~$\eta\in\bfR$
vérifiant~$|\eta|x \leq x^{1/4}$, la quantité~$W(x, y ; q, \eta)$ soit un grand~$O$ de
\begin{equation}\label{majo_W}
\begin{aligned}
\frac{2^{\omega(q/q_1)}\sqrt{q_1}\Psi(x, y)}{\vphi(q)} \left( \frac{q}{q_1}\right)^{1-\alpha}
\Bigg( \frac{1}{(1+|\eta|x)^{\alpha+\beta-1} x^{1-\beta} H(u)^{\delta} } + (1+|\eta|x) R(x, y, Q)  \Bigg)
\end{aligned}
\end{equation}
où~$R(x, y, Q) = y^{-c_2/\log\log\log x} + \cL^{-c_2} + (\log x)^{3/2} x^{-c_2/\log Q}$.
\end{prop}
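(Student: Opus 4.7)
The plan is to follow closely the structure of the proof of Proposition~\ref{caracts_pcp} applied to the Dirichlet series~$LW(s, q ; y)$, but with the central contour shifted leftward by~$1-\beta$ in order to exploit Lemma~\ref{estim_lsieg}. Writing $W(x, y ; q, \eta) = \sum_{n \in S(x, y)} a_n \e(n\eta)$ with $a_n$ as in Lemma~\ref{hypoth_perron_a}, I would first verify the hypotheses of Lemma~\ref{perron_phi} at $\kappa = \alpha$ with $M \asymp 2^{\omega(q/q_1)}\sqrt{q_1}(q/q_1)^{1-\alpha}/\vphi(q)$, obtaining
\begin{equation*}
W(x, y ; q, \eta) = \frac{1}{2i\pi}\int_{\alpha - iT}^{\alpha + iT} LW(s, q ; y) x^s \cPhi(\eta x, s) \dd s + \text{erreur},
\end{equation*}
with the choice $T = \min\{y^{c/\log\log\log x}, \cL\}$ chosen so that the Perron error feeds the $y^{-c_2/\log\log\log x} + \cL^{-c_2}$ pieces of $R(x, y, Q)$.

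Next I would deform the segment $[\alpha - iT, \alpha + iT]$ to a broken path analogous to $\cC_1, \ldots, \cC_7$ of Proposition~\ref{caracts_pcp}, but centered at the line $\sigma = \alpha + \beta - 1$ (rather than $\sigma = \alpha$), with horizontal connectors at height $\pm T$ and a further vertical shift to $\sigma = \alpha + \beta - 1 - \ee$ (with $\ee = b/\log(QT)$) for $|\tau| > y^{1-\alpha}$. Since $LW(s, q ; y)$ is entire (as $\chi_1$ is nontrivial) and $\cPhi(\eta x, s)$ has its only pole at $s = 0$, far to the left of this contour, no residues are crossed. On the central vertical segment $\sigma = \alpha + \beta - 1$, $|\tau| \leq y^{1-\alpha}$, the crucial input is the estimate $L(\alpha + \beta - 1 + i\tau, \chi_1; y) \ll \zeta(\alpha, y) H(u)^{-\delta}$ from Lemma~\ref{estim_lsieg}; combined with the bounds on $\cPhi$ from Lemma~\ref{majo_phi} (yielding a factor $(1+|\eta x|)^{-(\alpha+\beta-1)}$) and the weight $x^{\alpha+\beta-1}$, this produces the first term of \eqref{majo_W} via \eqref{x_zeta_psi}. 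On the outer segments, where $|\tau| \geq y^{1-\alpha} = y^{\beta - \sigma}$, Lemma~\ref{majo_l_decal} (in its exceptional-character regime) controls $L(s, \chi_1 ; y)/L(\alpha + i\tau, \chi_1 ; y)$; bounding $\cPhi$ trivially by $O(1 + |\eta|x)$ there, these pieces fit into the term $(1+|\eta|x) R(x,y,Q)$ of \eqref{majo_W}, the contribution $(\log x)^{3/2} x^{-c_2/\log Q}$ arising from the horizontal shift by~$\ee$.

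The main obstacle will be the joint calibration of the parameters $T, Q, \ee$ so that the hypotheses of Lemmas~\ref{majo_l_decal} and~\ref{estim_lsieg} hold simultaneously. In particular, the assumption~$Q \leq y^{c_2/\log\log\log x}$ must yield $QT \leq y^{c_2'/\log\log\log x}$ for a suitable~$c_2'$, which is precisely the regime in which Lemma~\ref{estim_lsieg} is effective, and the hypothesis $(\log x)^{c_1} \leq y$ must suffice to keep $\alpha$ bounded away from zero (so that $\alpha + \beta - 1 > 0$) and to guarantee $y^{1-\alpha} \leq T/2$ so that Lemma~\ref{estim_lsieg} covers the entire central segment. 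The factor $2^{\omega(q/q_1)}\sqrt{q_1}(q/q_1)^{1-\alpha}/\vphi(q)$, appearing both through the Perron error in Lemma~\ref{hypoth_perron_a} and through the structural form $LW(s, q; y) = (q/q_1)^{1-s}\tau(\chi_1)\vphi(q)^{-1} (\cdots) L(s, \chi_1 ; y)$, must then be tracked uniformly across each segment; this is routine but tedious bookkeeping rather than a conceptual difficulty.
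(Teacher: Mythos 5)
Your plan reconstructs the paper's argument, but only for half of it. The contour through~$\sigma = \alpha+\beta-1$ with Lemma~\ref{estim_lsieg} on the central segment and Lemma~\ref{majo_l_decal} on the outer pieces is exactly what the paper does --- \emph{but only after first assuming}~$1-\beta \leq \sqrt{c_2}/\log QT$. This is not a matter of ``joint calibration of~$T, Q, \ee$'': Lemma~\ref{estim_lsieg} carries the hypothesis~$1-\beta \leq \sqrt{c_2}/\log QT$ as part of its statement, and when the exceptional zero is not that close to~$1$ the lemma is simply unavailable, so your central segment~$\cC'_4$ has no~$H(u)^{-\delta}$ saving at all. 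Nor can you fall back on Lemma~\ref{majo_l_decal} there: on the central segment one has~$|\tau| \leq y^{1-\alpha} = y^{\beta-\sigma}$, so its exceptional-character proviso~$|\tau| \geq \max\{1, y^{\beta-\sigma}\}$ fails, and the abscissa~$\alpha+\beta-1$ need not even lie in the admissible range~$[\alpha - c_2\ee, \alpha]$ since~$1-\beta$ can be of size~$b/\log QT$ while~$c_2$ must be taken small. So as written the proof has a genuine gap in the regime~$\beta \leq 1-\sqrt{c_2}/\log QT$.

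The paper closes this case by an entirely different route: when~$\beta \leq 1-\sqrt{c_2}/\log QT$, Proposition~\ref{caracts_norm} explicitly covers the~$Q$-exceptional character, so one writes~$W(x,y;q,\eta)$ as the sum over~$r \mid q/q_1$ of terms~$\Psi_0(x/r, y; \chi_{q/(q_1 r)}, r\eta)$ weighted by~$\tau(\chi_1)\mu(\cdot)\chi_1(\cdot)/(\vphi(q_1)\vphi(\cdot))$, applies Proposition~\ref{caracts_norm} to each, and sums; this is precisely where the term~$(1+|\eta|x)R(x,y,Q)$, and in particular the factor~$(\log x)^{3/2}x^{-c_2/\log Q}$, comes from --- not, as you suggest, from the horizontal shift by~$\ee$ in the contour (in the case~$1-\beta$ small, the shifted pieces only contribute quantities like~$x^{-\ee/4}\log x\log T$ and~$1/T$, which are absorbed into~$\cL^{-c_2}$ and~$y^{-c_2/\log\log\log x}$). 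Apart from this missing case distinction, your treatment of the case~$1-\beta \leq \sqrt{c_2}/\log QT$ (Perron via Lemmas~\ref{hypoth_perron_a} and~\ref{perron_phi} at~$\kappa=\alpha$, the deformed contour, Lemma~\ref{estim_lsieg} at the centre, Lemma~\ref{majo_l_decal} with the proviso~$|\tau| \geq y^{\beta-\sigma}$ on the outer segments, and the bookkeeping of the factor~$2^{\omega(q/q_1)}\sqrt{q_1}(q/q_1)^{1-\sigma}/\vphi(q)$ through~$LW$) matches the paper's proof.
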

\begin{remarque}
La contrainte sur~$Q$ n'est pas limitante dans les applications que l'on envisage.
L'approche adoptée dans~\cite[Lemma~5.2]{soundararajan2008distribution},
permet d'obtenir une majoration moins forte mais qui est valable lorsque~$Q$ est de l'ordre d'une petite puissance de~$y$.
Cela n'est pas étudié ici.
\end{remarque}

\begin{proof}
On pose~$T := \min\{y^{c_2/\log \log \log x}, \cL\}$ et~$\ee = c_3/\log QT$, $c_2$ étant choisie suffisamment petite
pour que les hypothèses du Lemme~\ref{estim_lsieg} vis-à-vis de~$T$ et~$Q$ soient vérifiées, et~$c_3$ étant choisie
plus petite que la constante~$c_2$ du Lemme~\ref{estim_lsieg}. Lorsque~$\beta \leq 1-\sqrt{c_2}/\log QT$, la Proposition~\ref{caracts_norm}
s'applique et on obtient, de la même façon qu'à la Proposition~\ref{prop_E_VW},
\begin{align*}
W(x, y ; q, \eta) &\ = \frac{\tau(\chi_1)}{\vphi(q_1)} \sum_{r|(q/q_1)}
\frac{\mu\left(\frac{q}{q_1r}\right)\chi_1\left(\frac{q}{q_1r}\right)}{\vphi\left(\frac{q}{q_1r}\right)} \Psi_0(x/r, y ; \chi_{q/(q_1r)}, r\eta) \\
&\ \ll \Psi(x, y) (1+|\eta|x) \left( y^{-c_2} + \cL^{c_2} + (\log x)^{3/2} x^{-c_2/\log Q} \right)
\frac{\sqrt{q_1}}{\vphi(q_1)}\left(\frac{q}{q_1}\right)^{-\alpha} \sum_{r|(q/q_1)} \frac{\mu^2(r)}{\vphi(r)} \\
&\ \ll \frac{2^{\omega(q/q_1)}\sqrt{q_1}}{\vphi(q)} \left(\frac{q}{q_1}\right)^{1-\alpha} \Psi(x, y) (1+|\eta|x) R
\end{align*}
qui est de l'ordre de la majoration annoncée. On suppose maintenant~$1-\beta \leq \sqrt{c_2}/\log QT$.
De même qu'à la Proposition~\ref{caracts_pcp}, par les Lemmes~\ref{hypoth_perron_a} et~\ref{perron_phi}, on a
\begin{equation*}
\begin{aligned}
 W(x, y ; q, \eta) = &\ \frac{1}{2i\pi}\int_{\alpha-iT}^{\alpha+iT}{LW(s, q ; y) x^s \cPhi(\eta x, s) \dd s} \\
&\ + O\left(\frac{2^{\omega(q/q_1)}\sqrt{q_1}}{\vphi(q)}\left(\frac{q}{q_1}\right)^{1-\alpha} \Psi(x, y) \frac{\log x \log T}{T^{\alpha/2}} \right)
\end{aligned}
\end{equation*}
On déforme le contour pour suivre le chemin~$\cup_{i=1}^{7}{\cC'_i}$, où
\begin{enumerate}
\item $\cC'_1$ est le segment $[\alpha-iT, \alpha+\beta-1-\ee -iT]$,
\item $\cC'_2$ est le segment $[\alpha+\beta-1-\ee -iT, \alpha+\beta-1-\ee -i y^{1-\alpha+\ee}]$,
\item $\cC'_3$ est le chemin reliant le point $\alpha+\beta-1-\ee-i y^{1-\alpha+\ee}$
au point $\alpha+\beta-1 - i y^{1-\alpha}$ suivant la courbe $\tau = y^{\beta-\sigma}$,
\item $\cC'_4$ est le segment $[\alpha+\beta-1 - i y^{1-\alpha}, \alpha+\beta-1 + i y^{1-\alpha}]$,
\item $\cC'_5$ est le chemin reliant le point $\alpha+\beta-1 + i y^{1-\alpha}$
au point $\alpha+\beta-1-\ee +i y^{1-\alpha+\ee}$ suivant la courbe $\tau = -y^{\beta-\sigma}$,
\item $\cC'_6$ est le segment $[\alpha+\beta-1-\ee +i y^{1-\alpha+\ee}, \alpha+\beta-1-\ee +iT]$,
\item $\cC'_7$ est le segment $[\alpha+\beta-1-\ee+iT, \alpha+iT ]$.
\end{enumerate}
Pour $j\in\{1, \cdots, 7\}$, on note $I'_j$ la contribution du chemin $\cC'_j$ :
\[ I'_j = I'_j(\eta) := \frac{1}{2i\pi} \int_{\cC'_j} LW(s, q ; y) x^s \cPhi(\eta x, s) \dd s .\]

\begin{center}
\begin{picture}(80,80)
\linethickness{0.2mm}
\put(0, 40){ \vector(1, 0){80} }
\put(18, 0){ \vector(0, 1){80} }
\put(40, 39){ \line(0, 1){2} }
\put(17, 10){ \line(1, 0){2} }
\put(17, 20){ \line(1, 0){2} }
\put(17, 30){ \line(1, 0){2} }
\put(17, 50){ \line(1, 0){2} }
\put(17, 60){ \line(1, 0){2} }
\put(17, 70){ \line(1, 0){2} }
\put(70, 39){ \line(0, 1){2} }

\put(70,  10){ \line(-1, 0){30} }
\put(40,  10){ \line(0, 1){10} }
\qbezier(41, 20)(52, 28)(61, 30)
\put(60, 30){ \line(0, 1){20} }
\qbezier(61, 50)(52, 52)(41, 60)
\put(40, 60){ \line(0, 1){10} }
\put(40, 70){ \line(1, 0){30} }

\put(70, 10){ \vector(-1, 0){ 15} }
\put(40, 10){ \vector(0, 1){ 5} }
\put(60, 30){ \vector(0, 1){ 5} }
\put(40, 60){ \vector(0, 1){ 5} }
\put(40, 70){ \vector(1, 0){ 15} }

\put( 10, 68){ $T$ }
\put( 0, 58){ $y^{1-\alpha+\ee}$ }
\put( 8, 48){ $y^{1-\alpha}$ }
\put( 5, 28){ $-y^{1-\alpha}$ }
\put(-3,  18){ $-y^{1-\alpha+\ee}$ }
\put( 7,  8){ $-T$ }
\put(19, 41){ $0$ }
\put(27, 42){ $\alpha+\beta-1-\ee$ }
\put(61, 42){ $\alpha+\beta-1$ }
\put(70, 37){ $\alpha$ }

\put(53, 5){ $\cC'_1$ }
\put(42, 14){ $\cC'_2$ }
\put(48, 29){ $\cC'_3$ }
\put(62, 35){ $\cC'_4$ }
\put(48, 56){ $\cC'_5$ }
\put(42, 64){ $\cC'_6$ }
\put(53, 73){ $\cC'_7$ }

\end{picture}
\end{center}

\bigskip

De la même façon que dans la démonstration de la Proposition~\ref{caracts_pcp}, on a
\begin{align*}
I'_7 \ll &\ \int_{\alpha+\beta-1-\ee}^{\alpha} \frac{2^{\omega(q/q_1)}\sqrt{q_1}}{\vphi(q)}\left(\frac{q}{q_1}\right)^{1-\sigma}
\frac{\zeta(\alpha, y) x^{(\alpha-\sigma)/2} x^\sigma (1+|\eta|x)}{T} \dd \sigma \\
 \ll &\ \frac{2^{\omega(q/q_1)}\sqrt{q_1}}{\vphi(q)} \left(\frac{q}{q_1}\right)^{1-\alpha} \frac{\Psi(x, y)(1+|\eta|x) \log x}{T}
.\end{align*}

Sur le segment $\cC'_6$, on a encore~$|\tau| \geq y^{\beta-\sigma}$, d'où par le Lemme~\ref{majo_l_decal},
\begin{align*}
I'_6 \ll &\ \int_{y^{1-\alpha+\ee}}^{T} \frac{2^{\omega(q/q_1)}\sqrt{q_1}}{\vphi(q)} \left(\frac{q}{q_1}\right)^{2-\alpha-\beta+\ee}
\frac{\zeta(\alpha, y) x^{(\beta-1)/2-\ee/2} x^\alpha (1+|\eta|x)}{\tau} \dd \tau \\
\ll &\ \frac{2^{\omega(q/q_1)}\sqrt{q_1}}{\vphi(q)}\left(\frac{q}{q_1}\right)^{1-\alpha} \frac{\Psi(x, y)(1+|\eta|x)\log x \log T}{x^{\ee/4}}
\end{align*}
quitte à supposer~$c_2$ petite, afin d'avoir~$q/q_1 \leq x^{1/4}$.

Sur le chemin $\cC'_5$, les Lemme~\ref{majo_l_decal} et~\ref{estim_lsieg} permettent d'écrire pour un certain $\delta>0$,
\begin{align*}
|L(\sigma+i\tau, \chi_1 ; y)| \ll &\ |L(\alpha+\beta-1+i\tau, \chi_1 ; y)|x^{(\alpha+\beta-1-\sigma)/2} \\
\ll &\ \zeta(\alpha, y) x^{(\alpha+\beta-1-\sigma)/2} H(u)^{-\delta}
.\end{align*}
En remarquant que~$\ee \log q \ll 1$, on obtient
\begin{align*}
I'_5 \ll &\ \int_{\alpha+\beta-1-\ee}^{\alpha+\beta-1} \frac{2^{\omega(q/q_1)}\sqrt{q_1}}{\vphi(q)} \left(\frac{q}{q_1}\right)^{1-\sigma}
\zeta(\alpha, y) x^{(\alpha+\beta-1+\sigma)/2} H(u)^{-\delta} 
\frac{(\log y)y^{2(\beta-\sigma)}\log(2+|\eta|x)}{(1+|\eta|x)^{\sigma}} \dd \sigma \\
 \ll&\ \frac{2^{\omega(q/q_1)}\sqrt{q_1}}{\vphi(q)} \left(\frac{q}{q_1}\right)^{1-\alpha}
\frac{\Psi(x, y)}{(1+|\eta|x)^{\alpha+\beta-1} x^{1-\beta} H(u)^{\delta/2}}
\end{align*}
où l'on a utilisé l'hypothèse~$\log y \leq (\log x) / (\log \log x)^4$ sous la
forme~$(\log x) H(u)^{-\eta} \ll_\eta 1$ pour tout~$\eta>0$.

Sur $\cC'_4$ les calculs sont similaires : par le Lemme~\ref{estim_lsieg} on a
\begin{align*}
I'_4 \ll&\ \int_{0}^{y^{1-\alpha}} \frac{2^{\omega(q/q_1)}\sqrt{q_1}}{\vphi(q)} \left(\frac{q}{q_1}\right)^{2-\alpha-\beta}
\zeta(\alpha, y) H(u)^{-\delta} x^{\alpha+\beta-1} \frac{(1+|\tau|)\log(2+|\eta|x)}{(1+|\eta|x)^{\alpha+\beta-1}} \dd \tau \\
\ll &\ \frac{2^{\omega(q/q_1)\sqrt{q_1}}}{\vphi(q)} \left(\frac{q}{q_1}\right)^{1-\alpha}
\frac{\Psi(x, y)}{(1+|\eta|x)^{\alpha+\beta-1} x^{1-\beta} H(u)^{\delta/2}}
\end{align*}
ce qui est de l'ordre de grandeur souhaité.

Pour~$j\in\{1, 2, 3\}$, le caractère $\chi_1$ étant réel, la même remarque qu'à la démonstration de la Proposition~\ref{caracts_pcp}
est valable : on a $I'_j(\eta) = \overline{I'_{8-j}(-\eta)}$ et les majorations qui concernent $I'_{8-j}$ s'appliquent.

En regroupant les différentes contributions et en observant que
\[ \frac{1}{T^{\alpha/2}} + \frac{1}{T} + \frac{\log x \log T}{x^{\ee/8}} \ll R \]
quitte à réduire la valeur de~$c_2$, on obtient la majoration annoncée.

\end{proof}

\subsection{Démonstration du Théorème~\ref{thm_E_pcp}}

On pose~$Q = \lceil T_2^{c_2} \rceil$, en observant que~$\log x / \log Q \gg \log \cL$ lorsque~$c_2 \leq 1$.
Quitte à supposer~$c_1$ suffisamment grande et~$c_2$ suffisamment petite, $x$, $y$ et~$Q$
vérifient les hypothèses des Propositions~\ref{prop_E_VW}, \ref{caracts_pcp}, et~\ref{caracts_sieg}.
Le terme d'erreur provenant de l'estimation~\eqref{estim_E_VW} est
\[ \ll (1+|\eta|x) \Psi(x, y) \left( y^{-c_3/\log\log\log x} + \cL^{-c_3} \right) \ll \Psi(x, y) T_2^{-c_2} \]
pour une certaine constante~$c_3$. Le terme d'erreur provenant de l'estimation~\eqref{estim_V_col} est
\begin{align*}
& \ll \frac{2^{\omega(q)}q^{1-\alpha}\Psi(x, y)}{\vphi(q)} \left( \frac{(\log q)^2 (\log(2+|\eta|x))^3}{(1+|\eta|x)^\alpha u}
+ (1+|\eta|x) \left( y^{-c_3} + \e^{-c_3 (\log x)^{3/5} (\log \log x)^{-1/5} } \right) \right) \\
& \ll \frac{2^{\omega(q)}q^{1-\alpha}\Psi(x, y)}{\vphi(q) (1+|\eta|x)^\alpha} \frac{(\log q)^2 (\log(2+|\eta|x))^3}{u}
+ \frac{\Psi(x, y)}{T_1^{c_2}} 
\end{align*}
Enfin, la quantité~$R$ intervenant dans~\eqref{majo_W} est~$\ll T_2^{-c_2}$. Ceci implique l'estimation~\eqref{estim_E_pcp}.

Si on suppose de plus que~$(x, y)\in (H_\ee)$, $q\leq \cY_\ee$ et~$|\eta|x \leq \cY_\ee/q$ pour un certain~$\ee>0$,
alors en évaluant~$V(x, y ; q, \eta)$ par la Proposition~\ref{caracts_pcp_saias} plutôt que~\ref{caracts_pcp},
on obtient l'estimation~\eqref{estim_E_BG}.

\section{En norme $L^2$}

On s'intéresse ici à l'obtention d'une majoration pour le deuxième moment de~$V(x, y ; q, \eta)$ et~$W(x, y ; q, \eta)$.
Lorsque~$2\leq y \leq x$, on a
\[ \int_0^1{|E(x, y ; \vth)|^2 \dd \vth} = \Psi(x, y) .\]
Le lemme qui suit est une majoration de même ordre de grandeur pour les normes~$L^2$
sur les arcs majeurs de~$V(x, y ; q, \eta)$ et~$W(x, y ; q, \eta)$,
qui sont les termes principaux apparaissant dans l'estimation~\eqref{estim_E_VW}.

\begin{prop}\label{prop_VW_L2}

Lorsque~$2\leq y \leq x$, $Q\geq 2$ et~$R\leq x$, on a
\[ \sum_{q \leq R}{ \vphi(q) \int_{-1/(qQ)}^{1/(qQ)}{ |V(x, y ; q, \eta)|^2 \dd \eta} } \ll R^{1-\alpha} (\log x)^5 \Psi(x,y) \]
\[ \sum_{q \leq R}{ \vphi(q) \DSZ(q) \int_{-1/(qQ)}^{1/(qQ)}{ |W(x, y ; q, \eta)|^2 \dd \eta} }
\ll \frac{q_1^2}{\vphi(q_1)^2} R^{1-\alpha} (\log x)^5 \Psi(x,y) \]

\end{prop}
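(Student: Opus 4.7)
The plan is to decompose $V$ (and similarly $W$) as a linear combination of simpler exponential sums through Möbius inversion, and then to apply Cauchy--Schwarz, a change of variables, and Parseval.

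Splitting $n \in S(x, y)$ according to $d = (n, q)$ and expanding the resulting coprimality condition $(n/d, q/d) = 1$ by Möbius gives a representation
\[
V(x, y ; q, \eta) = \sum_{\substack{f | q \\ P(f) \leq y}} c_{q, f}\, E(x/f, y ; f \eta),
\]
with $c_{q, f} = \sum_{d | f} \mu(q/d) \mu(f/d)/\vphi(q/d)$. The condition $\mu(q/d) \neq 0$ forces $q/d$ to be squarefree, whence $\vphi(q/d) \gg (q/d)/\log\log q$ and $|c_{q, f}| \ll (\log\log q)^2\, f/q$. Cauchy--Schwarz on the divisor variable $f$ followed by the substitution $\theta = f \eta$ then gives
\[
\int_{-1/(qQ)}^{1/(qQ)} |V|^2 \dd \eta \leq \tau(q) \sum_{f | q} \frac{|c_{q, f}|^2}{f} \int_{|\theta| \leq 1/(mQ)} |E(x/f, y ; \theta)|^2 \dd \theta,
\]
where $m = q/f$. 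Upper-bounding the inner integral by the full Parseval identity $\int_0^1 |E(x/f, y ; \cdot)|^2 = \Psi(x/f, y)$ and using~\eqref{estim_psi_local_crude} in the form $\Psi(x/f, y) \ll \Psi(x, y)/f^\alpha$ brings in the crucial factor $f^{-\alpha}$ that is the source of the $R^{1-\alpha}$ saving.

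Summing over $q \leq R$ with weight $\vphi(q)$, one writes $q = fm$ and interchanges the order of summation, reducing to arithmetic sums of the shape $\sum_{f \leq R} \tau(f)^{O(1)} f^{-\alpha} \ll R^{1-\alpha}(\log R)^{O(1)}$ and polylogarithmic sums over $m$ of the type $\sum_m m \tau(m)^{O(1)}/\vphi(m)^2$. Collecting yields the announced bound $R^{1-\alpha}(\log x)^5 \Psi(x, y)$.

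The argument for $W$ is structurally identical: starting from its definition as a sum over $r | q/q_1$ of exponential sums twisted by $\chi_r$, one expands $\chi_r$ via the primitive character $\chi_1$ and Möbius to obtain a Möbius-type decomposition of the same form. The factor $q_1^2/\vphi(q_1)^2$ on the right-hand side arises from squaring the Gauss-sum prefactor $\tau(\chi_1)/\vphi(q_1)$ (of modulus $\sqrt{q_1}/\vphi(q_1)$) combined with auxiliary combinatorial factors reflecting the constraint $q_1 | q$ in the outer summation. The main obstacle is the careful bookkeeping of the coefficients $c_{q,f}$ and the polylogarithmic factors so that the total loss does not exceed $(\log x)^5$; this is essentially a multiplicative-function exercise, but must be carried out with some care.
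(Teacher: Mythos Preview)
Your approach is correct but differs from the paper's. The paper does not pass through a Möbius decomposition into the sums $E(x/f,y;f\eta)$; instead it writes $W$ (and then $V$ by specializing $q_1=1$) as a single weighted sum
\[
W(x,y;q,\eta)=\frac{\tau(\chi_1)}{\vphi(q_1)}\sum_{n\in S(x,y)}e(n\eta)\,w_{(r_1,n)}(n),
\]
expands $|W|^2$ as a double sum over $n,m\in S(x,y)$, integrates in $\eta$ over $[-1/(qQ),1/(qQ)]$ to get the Fej\'er-type kernel bound $\ll 1/(qQ+|m-n|)$, and then estimates the diagonal and near-diagonal contributions directly, the factor $R^{1-\alpha}$ arising from $\Psi(x/d,y)\ll\Psi(x,y)/d^\alpha$ exactly as in your argument. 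Your route trades this explicit off-diagonal computation for a Cauchy--Schwarz over the divisor variable (the $\tau(q)$ loss) followed by Parseval over the full circle; the extra logarithmic factors from Cauchy--Schwarz fit comfortably within the $(\log x)^5$ budget, so nothing is lost. Your argument is arguably more modular (it reduces everything to $\int_0^1|E|^2=\Psi$ and elementary divisor sums), while the paper's is slightly sharper in principle and closer in spirit to a large-sieve inequality. The $W$ case does go through as you indicate: after opening $\chi_r$ one reaches sums $\sum_{\ell\in S(x/f,y)}\chi_1(\ell)e(\ell f\eta)$, whose Parseval norm is again $\leq\Psi(x/f,y)$.
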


On note que $q_1/\vphi(q_1) \ll \log \log q_1$.

\begin{proof}
Soit $q$ avec $\DSZ(q)=1$. On note pour simplifier $r_1 := q/q_1$. On a
\begin{align*}
W(x, y ; q, \eta) = &\ \frac{\tau(\chi_1)}{\vphi(q_1)} \sum_{ r | r_1 } {\frac{\mu(r)\chi_1(r)}{\vphi(r)} \sum_{m \in S(x r/r_1, y)}{ \e\left(\frac{m r_1}{r} \eta\right) \chi_r(m) } } \\
= &\ \frac{\tau(\chi_1)}{\vphi(q_1)} \sum_{r' | r_1}{ \sum_{m \in S(x/r', y)}{ \frac{\mu\left(\frac{r_1}{r'}\right)\chi_1\left(\frac{r_1}{r'}\right)\chi_{\frac{r_1}{r'}}(m)}{\vphi\left(\frac{r_1}{r'}\right)} \e(mr'\eta)}} \\
= &\ \frac{\tau(\chi_1)}{\vphi(q_1)} \sum_{n \in S(x, y)}{ \sum_{r' | (r_1, n)}{ \e(n\eta) \frac{\mu\left(\frac{r_1}{r'}\right)\chi_1\left(\frac{r_1}{r'}\right)\chi_{\frac{r_1}{r'}}\left(\frac{n}{r'}\right)}{\vphi\left(\frac{r_1}{r'}\right)} }}
.\end{align*}
Notons temporairement
$w_{r'}(n) := \mu(r_1/r')\chi_1(r_1/r')\chi_{r_1/r'}(n/r') / \vphi(r_1/r')$.
La présence du terme en~$\chi_{r_1/r'}$ annule $w_{r'}(n)$ sauf si $(n/r',
q/r') = 1$ soit $r' = (q, n)$. En particulier, lorsque~$r' < (r_1, n)$ on a~$w_{r'}(n)=0$ et on obtient
\[ W(x, y ; q, \eta) = \frac{\tau(\chi_1)}{\vphi(q_1)} \sum_{n \in S(x, y)}{ \e(n\eta) w_{(r_1,n)}(n) } .\]
On a donc
\begin{align*}
& I := \sum_{\substack{q \leq R \\ q_1|q}} { \vphi(q) \int_{-1/(qQ)}^{1/(qQ)}{ \left| W(x, y ; q, \eta) \right|^2 \dd \eta} } \\
 = &\ \frac{\tau(\chi_1)^2}{\vphi(q_1)^2} \sum_{\substack{q\leq R \\ q_1|q}}{ \vphi(q) \Bigg(
\sum_{n \in S(x, y)}{ \frac{2}{qQ} w_{(r_1,n)}(n)^2 } + 
\sum_{\substack{n,m \in S(x, y) \\ m \neq n}}{ \frac{\sin\left(\frac{2\pi (m-n)}{qQ}\right)}{\pi (m-n)} w_{(r_1,n)}(n) w_{(r_1,m)}(m) }
\Bigg) }
.\end{align*}
On a $\sin(2\pi(m-n)/(qQ)) / (\pi (m-n)) \ll 1/(qQ + |m-n|)$. La majoration~$w_{r'}(n) \ll 1/\vphi(r_1/r')$ fournit donc
\begin{align*}
I \ll \frac{q_1}{\vphi(q_1)^2} \sum_{\substack{q\leq R \\ q_1 | q}}  \vphi(q) \sum_{n \in S(x, y)} \frac{1}{\vphi\left(\frac{r_1}{(r_1,n)}\right)}
\Bigg( &\ \frac{1}{qQ}\sum_{n \leq m \leq n+qQ}{\frac{1}{\vphi\left(\frac{r_1}{(r_1,m)}\right)}}  + \sum_{n+qQ < m \leq x}{\frac{1}{(m-n)\vphi\left(\frac{r_1}{(r_1,m)}\right)}} \Bigg) 
.\end{align*}
Le premier terme dans la parenthèse intérieure est
\[ \leq \frac{1}{qQ} \sum_{d|r_1}{ \sum_{\substack{ \frac{n}{d} \leq m' \leq \frac{n+qQ}{d} }}{ \frac{1}{\vphi(r_1/d)} }} \leq \frac{\tau(r_1)}{\vphi(r_1)} .\]

Le second terme est
\begin{align*}
 \leq \sum_{d|r_1}{ \frac{1}{\vphi(r_1/d)} \sum_{\substack{\frac{n+qQ}{d}<m'\leq \frac{x}{d}}}{ \frac{1}{m'd-n} } }
 \leq \sum_{d|r_1}{ \frac{1}{\vphi(r_1/d)} \int_{\frac{n+qQ}{d}-1}^{\frac{x}{d}}{ \frac{1}{td-n} \dd t } } \ll (\log x) \frac{\tau(r_1)}{\vphi(r_1)} 
.\end{align*}
En utilisant~\eqref{estim_psi_local_crude}, on obtient donc, en utilisant~$\Psi(x/d, y) \ll r_1^{1-\alpha} \Psi(x, y)/d \quad(d\leq r_1)$,
\begin{align*}
I&\ \ll (\log x) \frac{q_1}{\vphi(q_1)^2} \sum_{\substack{q\leq R \\ q_1 | q}} {
\frac{\vphi(q)\tau(r_1)}{\vphi(r_1)} \sum_{d|r_1}{ \sum_{\substack{n \in S(x, y)\\d|n}}{ \frac{1}{\vphi(r_1/d)} } }} \\
&\ \ll (\log x) \frac{q_1}{\vphi(q_1)^2} \Psi(x, y) \sum_{r_1 \leq R/q_1} \frac{\vphi(q)\tau(r_1)^2 r_1^{1-\alpha}}{\vphi(r_1)^2}
.\end{align*}
On a $\vphi(q_1r_1) \leq q_1\vphi(r_1)$, la somme en~$r_1$ est donc $\ll R^{1-\alpha} q_1 (\log R)^4 \ll R^{1-\alpha} q_1 (\log x)^4$, et on obtient
\begin{align*}
I \ll \frac{q_1^2}{\vphi(q_1)^2}R^{1-\alpha} (\log x)^5 \Psi(x, y)
\end{align*}

On remarque que l'on n'a fait aucune hypothèse spécifique à $\chi_r$ et $q_1$ pour mener ce calcul.
Le cas~$q_1=1$, $\chi_r$ étant alors le caractère principal de module $r$, mène à la majoration
\begin{align*}
\sum_{q  \leq R} {\vphi(q) \int_{-1/(qQ)}^{1/(qQ)}{ \left| V(x, y ; q, \eta) \right|^2 \dd \eta} }
\ll R^{1-\alpha} (\log x)^5 \Psi(x, y)
\end{align*}

\end{proof}

\section{Application à un théorème de Daboussi}

\begin{proof}[Démonstration du Théorème~\ref{thm_daboussi}]
On suit la démonstration de~\cite[théorème~1.5]{tenerdlb2005turan}. Le lecteur peut s'y référer pour les détails.
On ne reprend ici que les étapes intermédiaires.
Soit~$c$ la constante donnée par le Théorème~\ref{thm_E_negl} et~$Y : [2, \infty[ \to \bfR$ une fonction croissante
telle que pour tout~$x\geq 2$, on ait~$(x, Y(x)) \in \cD_{c}$.
Soit~$\vth$ un irrationnel et~$f : \bfN \to \bfC$ une fonction multiplicative, on suppose
\[ \sum_{n\in S(x,y)} |f(n)|^2 \leq K_f \Psi(x,y) \quad (Y(x) \leq y \leq x)\]
pour un certain réel~$K_f>0$ dépendant au plus de~$f$.
On suppose~$Y(x) \leq y \leq x$ ; en particulier~$\alpha \geq 3/4$ pour~$x$ et~$y$ assez grands.
On note $E_f(x, y ; \vth) := \sum_{n\in S(x,y)} f(n)\e(n\vth)$.
Les calculs faits dans~\cite[formule (8.6)]{tenerdlb2005turan},
qui découlent d'une forme duale de l'inégalité de Turán-Kubilius~\cite[théorème 1.2]{tenerdlb2005turan},
montrent que pour tout~$z\geq 2$,
\[ E_f(x, y ; \vth) = \frac{1}{L(z)} \sum_{\substack{p\leq z \\ |f(p)|\leq 2}}
\sum_{\substack{m\in S(x/p,y)}} f(p) f(m) \e(mp\vth)  + O\left(\frac{\sqrt{K_f} \Psi(x,y)}{\sqrt{L(z)}}\right) \]
où l'on a noté
\[ L(z) := \sum_{\substack{p\leq z \\ |f(p)|\leq 2}} \frac{1-p^{-\alpha}}{p^\alpha} \asymp \sum_{\substack{p\leq z \\ |f(p)|\leq 2}} \frac{1}{p^\alpha} .\]
On a également, d'après~\cite[lemma 1]{daboussi1975fonctions}, pour un certain réel~$z_0 = z_0(f)\geq 2$ et tout~$z\geq z_0$,
\[ L(z) \gg \sum_{\substack{p\leq z \\ |f(p)|\leq 2}} \frac{1}{p} \gg \log \log z \]
grâce à l'hypothèse faite sur~$f$. Toujours d'après les calculs faits dans~\cite{tenerdlb2005turan}, par une inégalité de Cauchy-Schwarz, on a
\begin{equation}\label{majo_tp_Ef}
\begin{aligned}
&\ \sum_{\substack{p\leq z \\ |f(p)|\leq 2}} \sum_{\substack{m\in S(x/p,y)}} f(p) f(m) \e(mp\vth) \\
\ll &\ \sqrt{K_f \Psi(x, y)} \Bigg( \sum_{\substack{p\leq z\\|f(p)| \leq 2}} \Psi(x/p, y)
+ \sum_{\substack{p < q\leq z\\|f(p)|, |f(q)| \leq 2}} |E(x/p, y ; (p-q)\vth)| \Bigg)^{1/2}
.\end{aligned}
\end{equation}
Le Théorème~\ref{thm_E_negl} dans le cas~$y \leq \exp\{(\log x)/(\log \log x)^4\}$, et par exemple~\cite[théorème~1.5]{tenerdlb2005turan}
dans le cas contraire impliquent que chaque terme de la seconde somme du membre de droite est~$o_{\vth, p, q}(\Psi(x/p,y))$ lorsque~$x$
et~$y$ tendent vers l'infini en vérifiant~$Y(x) \leq y \leq x$. Le nombre de termes est borné par une fonction de~$z$,
par l'estimation~\eqref{estim_psi_local_crude}, le membre de gauche de~\eqref{majo_tp_Ef}
est donc~$\ll \sqrt{K_f} \Psi(x, y) (\sqrt{L(z)} + o_{\vth, z}(1))$ pour tout~$z\geq z_0$ fixé, ainsi
\[ \limsup_{\substack{x, y \to \infty \\ Y(x) \leq y \leq x}} { \frac{E_f(x, y ; \vth)}{\sqrt{K_f} \Psi(x, y)} }
\ll  \frac{1}{\sqrt{L(z)}} \]
et le résultat voulu suit en faisant tendre~$z$ vers l'infini.
\end{proof}

\section{Application aux sommes friables d'entiers friables}

On rappelle que~$N(x,y)$ a été défini en~\eqref{def_Nxy}.

\begin{proof}[Démonstration du Théorème~\ref{thm_abc}]
On a pour tous~$x$ et~$y$ avec~$2\leq y \leq x$,
\[ N(x, y) = \int_0^1 E(x, y ; \vth)^2 E(x, y ; -\vth) \dd \vth .\]
Soit $c>0$ et~$(x,y)\in\cD_c$. Lorsque~$y \geq \exp\{(\log x)/(\log \log x)^4\}$, le résultat de La Bretèche et Granville~\cite[théorème~1.1]{AGRB2010} est valable,
on suppose donc~$y \leq \exp\{(\log x)/(\log \log x)^4\}$. On note~$Q := \lceil x/\cL^{c_2}\rceil$, $R := \lceil \cL^{c_2} \rceil$,
et on suppose~$c$ suffisamment grande et~$c_2$ suffisamment petite pour que les hypothèses des Propositions~\ref{prop_E_VW}, \ref{caracts_pcp}
et~\ref{caracts_sieg} soient vérifiées pour~$x$, $y$ et~$R$ (dans le rôle de~$Q$).
Lorsque~$\vth$ vérifie~$q(\vth, Q) > R$, on a d'après le Lemme~\ref{majo_E_arcmin},
\[ E(x, y ; \vth) \ll  x \cL^{-c_3} \ll \Psi(x, y) \cL^{-c_3/2} \]
pour une certaine constante~$c_3>0$, quitte à supposer~$c> 2/c_3$.
La contribution des~$\vth$ vérifiant~$q(\vth, Q) > R$ est donc
\[ \ll \frac{\Psi(x, y)}{\cL^{c_3/2}} \int_0^1 |E(x, y ; \vth)|^2 \dd \vth = \frac{\Psi(x, y)^2}{\cL^{c_3/2}} \ll \frac{\Psi(x, y)^3}{x \cL^{c_3/4}} \]
quitte à supposer~$c> 4/c_3$. Lorsque~$q=q(\vth, Q) \leq R$, on écrit~$\vth=a/q+\eta$ avec~$|\eta| \leq 1/(qQ)$.
On remarque que~$q(-\vth, Q) = q(\vth, Q)$. La Proposition~\ref{caracts_norm} assure l'existence de~$c_4>0$ telle que
\[ E(x, y ; \vth) = V(x, y ; q, \eta) + \DSZ(q) \chi_1(a) W(x, y ; q, \eta) + O(\Psi(x, y) \cL^{-c_4}) .\]
Grâce à la Proposition~\ref{prop_VW_L2}, on a
\begin{align*}
&\ \frac{\Psi(x,y)}{\cL^{c_4}} \sum_{q\leq R} \sum_{\substack{a=1 \\ (a, q)=1}}^q
\int_{-1/(qQ)}^{1/(qQ)} \left|V(x, y ; q, \eta) + \DSZ(q) \chi_1(a) W(x, y ; q, \eta) \right|^2 \dd \eta \\
= &\ O\left( (\log \log x)^2(\log x)^3 \Psi(x, y)^2 \cL^{-c_4} \right) = O\left(\frac{\Psi(x, y)^3}{x \cL^{c_4/2}}\right)
\end{align*}
quitte à supposer~$c >2/c_4$. En notant que l'on a~$\chi_1(a)^2=1$ lorsque~$(a,q)=1$, et que~$\sum_{(a, q)=1} \chi_1(a) = 0$,
on obtient pour un certain réel~$c_5>0$
\[ N(x, y) = NV(x, y) + NW(x, y) + O\left(\frac{\Psi(x, y)^3}{x \cL^{c_5}}\right) \]
avec
\[ NV(x, y) := \sum_{q\leq R} \vphi(q) \int_{-1/(qQ)}^{1/(qQ)} V(x, y ; q, \eta)^2 V(x, y ; q, -\eta) \dd \eta \]
\[ NW(x, y) := \sum_{q\leq R} \DSZ(q) \vphi(q) \int_{-1/(qQ)}^{1/(qQ)} \left( 2V(x, y ; q, \eta) |W(x, y ; q, \eta)|^2 + V(x, y ; q, -\eta) W(x, y ; q, \eta)^2 \right) \dd \eta .\]
En appliquant les estimations~\eqref{majo_V_tv} et~\eqref{majo_W} et en remarquant que~$(1+|\eta|x)^{1-\alpha}=O(1)$ et~$q^{1-\alpha}=O(1)$,
on obtient après intégration par rapport à~$\eta$ et sommation sur~$q$, pour un certain~$c_6>0$,
\begin{equation}\label{majo_NW}
NW(x, y) \ll \frac{8^{\omega(q_1)} \Psi(x, y)^3}{q_1 x} \left( \frac{1}{H(u)^\delta x^{2-2\beta}} + \frac{1}{\cL^{c_6}} \right)
\ll \frac{\Psi(x, y) \log u}{x \log y}
.\end{equation}

On fixe un réel~$\ee>0$ tel que~$1/\cY_{2\ee} \ll (\log u)/\log y$. Grâce à la majoration~\eqref{majo_V_tv}, on obtient
\begin{align*}
NV(x, y) = &\ \sum_{q \leq \cY_\ee} \vphi(q) \int_{-\cY_{\ee}/(qx)}^{\cY_{\ee}/(qx)} V(x, y ; q, \eta)^2 V(x, y ; q, -\eta) \dd \eta \\
&\ + O\Bigg( \sum_{q > \cY_\ee} \frac{8^{\omega(q)} (\log q)^2 \Psi(x,y)^3}{\vphi(q)^2} \int_{0}^{1/(qQ)} \frac{(\log(2+|\eta|x))^3\dd\eta}{(1+|\eta|x)^3}\Bigg) \\
&\ + O\Bigg( \sum_{q \geq 1} \frac{8^{\omega(q)} (\log q)^2 \Psi(x,y)^3}{\vphi(q)^2} \int_{\cY_{\ee}/(qx)}^{\infty} \frac{(\log(2+|\eta|x))^3\dd\eta}{(1+|\eta|x)^3}\Bigg)
.\end{align*}
Les termes d'erreur sont~$\ll \Psi(x, y)^3/(x \cY_{2\ee})$.
La Proposition~\ref{caracts_pcp_saias} fournit alors, pour~$q \leq \cY_\ee$ et~$|\eta|x \leq \cY_{\ee} / (qx)$,
\[ V(x, y ; q, \eta)^2 V(x, y ; q, -\eta) = \Vt(x, y ; q, \eta)^2 \Vt(x, y ; q, -\eta)
+ O\left(\frac{8^{\omega(q)}\Psi(x, y)^3}{\vphi(q)^3(1+|\eta|x)^2\cY_{2\ee}}\right), \]
on obtient donc
\[ NV(x, y) = \sum_{q \leq \cY_\ee} \vphi(q) \int_{-\cY_{\ee}/(qx)}^{\cY_{\ee}/(qx)} \Vt(x, y ; q, \eta)^2 \Vt(x, y ; q, -\eta) \dd \eta
 + O\left(\frac{\Psi(x, y)^3}{x \cY_{2\ee}}\right) \]

On fait maintenant appel aux estimations suivantes, qui sont respectivement la formule~(4.22) et le lemme~5.1 de~\cite{AGRB2010}.
\begin{lemme}
Soit~$\ee>0$. Lorsque~$(x, y) \in (H_\ee)$, on a
\begin{equation}\label{AGRB_4.22}
\Vt(x, y ; q, \eta) \ll_\ee \frac{\cY_\ee^{2(1-\alpha)} u (\log u) 2^{\omega(q)}\Psi(x, y)}{\vphi(q) |\eta| x}
\qquad (q \leq \cY_\ee, \eta \neq 0),
\end{equation}
\begin{equation}\label{AGRB_lemme5.1}
\begin{aligned}
&\ \sum_{\substack{n\leq x \\ k|n}} \lambda\left(\frac{n}{k}, y\right) \int_{-1/(2k)}^{1/(2k)} \e((n'-n)\eta) \dd \eta \\
= &\ \frac{\lambda(n'/k, y)}{k}\bfUn_{[1, x]}(n') + O_\ee\left(\frac{ky/(\log y)}{\min\{|x-n'|+1, |n'-k|+1, n'\} }\right)
\quad (n'\in \bfN, k\leq \cY_\ee),
\end{aligned}
\end{equation}
\end{lemme}
\begin{proof}
D'après les calculs de la Proposition~\ref{caracts_pcp_saias} et en particulier l'estimation~\eqref{Vt_ipp}, on a pour tout diviseur~$k$ de~$q$,
\begin{align*}
\sum_{n\leq x/k} \e(nk\eta) \lambda(n, y) &\ =
E(x/k, x/k ; k\eta) \lambda(x/k, y) - \int_{x/(k\cY_\ee)}^x E(t, t ; k\eta) \lambda'(t, y) \dd t + O \left(\frac{\Psi(x, y)}{k \cY_{2\ee}}\right) \\
&\ \ll \frac{\cY_\ee^{2-2\alpha} u \log u \Psi(x, y)}{k|\eta|x}.
\end{align*}
En reportant cette majoration dans~\eqref{def_Vt}, on obtient l'estimation~\eqref{AGRB_4.22}.

En ce qui concerne l'estimation~\eqref{AGRB_lemme5.1}, on peut suivre la même preuve que celle de~\cite[lemme~5.1]{AGRB2010},
en remarquant que les seules estimations utilisées sont~$\lambda(t, y) \ll 1 \ \ (t, y \geq 2)$ ainsi que~\eqref{majo_lambda'},
toutes deux valables sous nos hypothèses.
\end{proof}
L'estimation~\eqref{AGRB_4.22} fournit
\begin{align*}
NV(x, y) =&\ \sum_{q\leq \cY_\ee} \vphi(q) \int_{-1/(2k_3)}^{1/(2k_3)} \Vt(x, y ; q, \eta)^2 \Vt(x, y ; q, -\eta)\dd \eta \\
&\ + O\left(\cY_\ee^{4-6\alpha} (u \log u)^3 \sum_{q\leq \cY_\ee} \frac{8^{\omega(q)} q^2}{\vphi(q)^2} \frac{\Psi(x, y)^3}{x}
+ \frac{\Psi(x, y)^3}{x \cY_{2\ee}}\right)
.\end{align*}
Sous nos hypothèses sur~$x$ et~$y$, on a~$u = \cY_{\ee}^{o(1)}$ et~$\alpha =1+o(1)$ lorsque~$x$ et~$y$ tendent vers l'infini, le terme d'erreur est donc~$O(\Psi(x, y)^3/(x \cY_{2\ee}))$.
En développant le terme~$\Vt(x, y ; q, -\eta)$, on écrit
\begin{equation}\label{NV_nv_R}
NV(x, y) = \sum_{q\leq \cY_\ee} \nv(x, y ; q) + R(x, y) + O\left(\frac{\Psi(x, y)^3}{x \cY_{2\ee}}\right)
\end{equation}
où l'on a posé, de même que dans~\cite{AGRB2010},
\[ \nv(x, y ; q) = \sum_{\substack{k_1, k_2, k_3 \in \bfN \\ k_i | q}}
\mu\left(\frac{q}{k_1}\right) \mu\left(\frac{q}{k_2}\right) \mu\left(\frac{q}{k_3}\right) \frac{k_1 k_2}{\vphi(q)^2}
\sum_{\substack{n_1, n_2 \in \bfN \\ n_1+n_2\leq x \\ k_i | n_i}}
\lambda\left(\frac{n_1}{k_1}, y\right) \lambda\left(\frac{n_2}{k_2}, y\right) \lambda\left(\frac{n_1+n_2}{k_3}, y\right) \]
et la majoration~\eqref{AGRB_lemme5.1} fournit
\begin{align*}
R(x, y) \ll &\ \frac{y}{\log y} \sum_{q \leq \cY_\ee} \frac{1}{\vphi(q)^2}
\sum_{k_1, k_2, k_3 | q} \mu^2\left(\frac{q}{k_1}\right)\mu^2\left(\frac{q}{k_2}\right) \mu^2\left(\frac{q}{k_3}\right) k_1 k_2 k_3^2 \\
&\ \sum_{\substack{n_1 \leq x \\ n_2 \leq x \\ k_i | n_i}} \lambda\left(\frac{n_1}{k_1}, y\right) \lambda\left(\frac{n_2}{k_2}, y\right)
\left\{\frac{1}{|x-n_1-n_2|+1} + \frac{1}{|n_1+n_2-k_3|+1} + \frac{1}{n_1+n_2}\right\}
.\end{align*}
En majorant trivialement~$\lambda(n_i/k_i, y)$ par~$O(1)$, puis la somme sur~$(n_1, n_2)$ par~$O(x \log x)$, on obtient
\begin{equation}\label{majo_R}
R(x, y) \ll x y u \cY_\ee^6 \ll \frac{\Psi(x, y)}{x \cY_{2\ee}}
.\end{equation}

Lorsque~$q y \leq n \leq x$, on a~$\lambda(n/k, y) \ll (kx/n)^{1-\alpha} \rho(u)$. Par ailleurs,
\[ \sum_{\substack{qy \leq n_i \leq x \\ k_i | n_i}} (n_1 n_2 (n_1+n_2))^{\alpha-1}
\leq \frac{1}{k_1 k_2} \int_0^x \int_0^{x} (t_1 t_2 (t_1+t_2))^{\alpha-1} \dd t_2 \dd t_1 \ll \frac{x^{3\alpha-1}}{k_1 k_2} .\]
En utilisant~$qy x \ll \Psi(x, y)^3/x$, on obtient~$\nv(x, y ; q) \ll 8^{\omega(q)} q^{3(1-\alpha)} \Psi(x, y)^3 / (\vphi(q)^2 x)$.
La série de terme général~$\nv(x, y ; q)$ est donc convergente et on a
\[ NV(x, y) = \sum_{q\geq 1} \nv(x, y ; q) + O\left(\frac{\Psi(x, y)^3}{x \cY_{2\ee}}\right) .\]
En utilisant la notation de~\cite[lemme 5.5]{AGRB2010}, on écrit
\begin{equation}\label{expr_sum_nv}
\sum_{q\geq 1} \nv(x, y ; q) = \sum_{(k_1, k_2, k_3) \in \bfN^3} g(k_1, k_2, k_3) S(k_1, k_2, k_3)
\end{equation}
où l'on a posé
\begin{equation}\label{def_S}
S(k_1, k_2, k_3) := \sum_{\substack{(n_1, n_2) \in \bfN^2 \\ k_i | n_i \\ n_1+n_2 \leq x}}
\lambda\left(\frac{n_1}{k_1}, y\right) \lambda\left(\frac{n_2}{k_2}, y\right) \lambda\left(\frac{n_1+n_2}{k_3}, y\right)
\end{equation}
et où~$g(k_1, k_2, k_3)$ vérifie
\[ \sum_{(k_1, k_2, k_3) \in \bfN^3} \frac{g(k_1, k_2, k_3)}{k_1 k_2} = 1, \qquad
\sum_{(k_1, k_2, k_3) \in \bfN^3} \frac{|g(k_1, k_2, k_3)|(k_1 k_2 k_3)^{1/4}}{k_1 k_2} \ll 1. \]
D'après ce qui précède, on a~$S(k_1, k_2, k_3) \ll (k_1 k_2 k_3)^{1-\alpha} \Psi(x, y)^3 / (k_1 k_2 x)$.
Il en découle que dans la somme du membre de droite de~\eqref{expr_sum_nv}, la contribution des triplets~$(k_1, k_2, k_3)$ avec~$k_1 k_2 k_3 \geq (\log y)^5$
est~$O(\Psi(x, y)^3/(x \log y))$.
Étant donné un triplet~$(k_1, k_2, k_3)$ vérifiant~$k_1 k_2 k_3 \leq (\log y)^5$, dans le membre de droite de~\eqref{def_S} :
\begin{itemize}
\item la contribution des~$(n_1, n_2)$ vérifiant~$n_1 \leq k_1 y$ ou~$n_2 \leq k_2 y$ est~$O((\log y)^5 y x)$,
ce qui est largement~$O(\Psi(x, y)^3 / (x \log y))$,
\item la contribution des~$(n_1, n_2)$ vérifiant~$k_2 y \leq n_2 \leq k_2 x / (\log y)^{6}$ et~$n_1 \geq k_1 y$ est
\begin{align*}
&\ \ll \frac{(k_1 k_2 k_3)^{1-\alpha}\Psi(x, y)^3}{x^{3\alpha}}
\sum_{\substack{k_1 y \leq n_1  \leq x \\ k_2 y \leq n_2 \leq k_2 x / (\log y)^{6} \\ k_i | n_i}} (n_1 n_2 (n_1+n_2))^{\alpha-1} \\
&\ \ll \frac{(k_1 k_2 k_3)^{1-\alpha} \Psi(x, y)^3}{k_1 k_2 x (\log y)^{2\alpha}} \ll \frac{\Psi(x, y)^3}{k_1 k_2 x \log y}
\end{align*}
puisque~$\alpha = 1+o(1)$ lorsque~$x$ et~$y$ tendent vers l'infini,
\item la contribution des~$(n_1, n_2)$ vérifiant~$k_1 y \leq n_1 \leq k_1 x / (\log y)^{6}$ et~$n_2 \geq k_2 y$ se majore de façon identique,
\item lorsque~$n / k \geq x/(\log y)^{6}$, on a
\begin{align*}
\lambda\left(\frac{n}{k}, y\right)  = \rho\left(\frac{\log(n/k)}{\log y}\right)\left\{ 1 + O\left(\frac{\log u}{\log y}\right) \right\} 
 = \rho(u) \left\{ 1 + O\left(\frac{(\log u) \log(kx/n)}{\log y} \right) \right\}
.\end{align*}
La contribution des~$(n_1, n_2)$ vérifiant~$n_i/k_i \geq x/(\log y)^{6}$ vaut donc
\[ \frac{\Psi(x, y)^3}{2x}\left\{ 1 + O\left(\frac{(\log u) \log(k_1 k_2 k_3)}{\log y}\right) \right\} \]
où l'on a utilisé la majoration~$\sum_{x/(\log y)^{6} \leq m \leq x/k} \log(x/m) \ll x(\log k)/k$.
\end{itemize}
En regroupant les résultats, on obtient
\[ \sum_{q\geq 1} \nv(x, y ; q) = \frac{\Psi(x, y)^3}{2x} \left\{ 1 + O\left(\frac{\log u}{\log y}\right) \right\} \]
et l'estimation~\eqref{estim_N} en découle en reportant cela avec~\eqref{majo_R} dans~\eqref{NV_nv_R}.

\end{proof}

\bibliographystyle{smfalpha}
\bibliography{expo_friable}

\end{document}